\documentclass{amsart}
\usepackage{amssymb, amsmath, latexsym, mathabx, dsfont,tikz}
\usepackage{multirow}
\usepackage{enumitem}

\linespread{1.3}
\renewcommand{\baselinestretch}{\baselinestretch}
\renewcommand{\baselinestretch}{1.1}
\numberwithin{equation}{section}

\newtheorem{thm}{Theorem}[section]
\newtheorem{lem}[thm]{Lemma}
\newtheorem{cor}[thm]{Corollary}
\newtheorem{prop}[thm]{Proposition}

\theoremstyle{definition}
\newtheorem{conj}[thm]{Conjecture}

\theoremstyle{remark}
\newtheorem{rmk}[thm]{Remark}
\newtheorem{exam}[thm]{Example}

\numberwithin{equation}{section}
\newcommand{\uz}{{u_\mathbb{Z}}}
\newcommand{\ra}{{\,\rightarrow\,}}
\newcommand{\nra}{{\,\nrightarrow\,}}
\newcommand{\gen}{\text{gen}}

\newcommand{\z}{{\mathbb Z}}
\newcommand{\q}{{\mathbb Q}}

\newcommand{\rank}{\text{rank}}

\newcommand{\Mod}[1]{\ (\mathrm{mod}\ #1)}

\newcommand{\df}[1]{\langle #1 \rangle}
\newcommand{\uu}[1][m,n]{\kappa(#1)}
\newcommand{\legendre}[2]{\left( \frac{#1}{#2} \right)}

\newenvironment{newenum}
{\begin{enumerate}[label={\rm(\roman*)}]}
{\end{enumerate}}

\begin{document}

\title[Representations of quadratic forms with same rank]{Representations of finite number of quadratic forms with same rank}

\author{Daejun Kim and Byeong-Kweon Oh}

\address{Department of Mathematical Sciences, Seoul National University, Seoul 08826, Korea}
\email{goodkdj@snu.ac.kr}

\address{Department of Mathematical Sciences and Research Institute of Mathematics, Seoul National University, Seoul 08826, Korea}
\email{bkoh@snu.ac.kr}
\thanks{This work was supported by the National Research Foundation of Korea (NRF-2019R1A2C1086347).}

\subjclass[2010]{Primary 11E12, 11E20, 11E25} \keywords{Quadratic forms, Representations}


\begin{abstract} Let $m, n$ be positive integers with $m\le n$. Let $\uu$ be the largest integer $k$ such that for any (positive definite and integral) quadratic forms $f_1,\ldots,f_k$ of rank $m$, there exists a quadratic form of  rank $n$ that represents $f_i$ for any $i$ with $1\le i \le k$. In this article, we determine the number $\uu$ for any integer $m$ with  $1\le m\le 8$, except for the cases when $(m,n)=(3,5)$ and $(4,6)$. In the exceptional cases, it will be proved that $1\le \kappa(3,5), \ \kappa(4,6)\le 2$. We also discuss some related topics. 
\end{abstract}

\maketitle

\section{Introduction}

For a positive integer $m$, let $\phi_m(X,Y)$ be the classical modular polynomial (for the definition of this, see \cite{GK}).  
For three positive integers $m_1$,$m_2$, and $m_3$, Gross and Keating \cite{GK} showed that the quotient ring $R(m_1,m_2,m_3)=\z [X,Y]/(\phi_{m_1},\phi_{m_2},\phi_{m_3})$ is finite if and only if there is no positive definite binary quadratic form $Q(x,y)=ax^2+bxy+cy^2$ over $\z$ which represents the three integers $m_1,m_2,m_3$.
Moreover, when $m_1,m_2,m_3$ satisfy this condition, they found an explicit formula for the cardinality of $R(m_1,m_2,m_3)$.
Later, G\"ortz proved  in \cite{Go} that there is no positive definite binary quadratic form $Q(x,y)=ax^2+bxy+cy^2$ over $\z$ which represents  $m_1,m_2,m_3$ if and only if every positive semi-definite half-integral quadratic form with diagonal $(m_1,m_2,m_3)$ is non-degenerate. 

Motivated by the above, we consider the following problem: For positive integers $m$ and $n$, find the largest integer $k$ such that for any positive definite quadratic forms $f_1,f_2,\dots,f_k$ with rank $m$, there is a quadratic form of rank $n$ that represents $f_i$ for any $i$ with $1\le i\le k$. In this article, the largest integer $k$ satisfying the above property will be denoted by $\kappa(m,n)$. 
As a sample, if $m=1$ and $n=2$, which is exactly the above case, then one may easily show that there does not exist a binary quadratic form that represents $1,2$, and $15$. Since there is always a binary quadratic form representing any two positive integers given in advance, we have $\kappa(1,2)=2$. It seems to be very difficult problem to determine the number $\kappa(m,n)$ for arbitrary positive integers $m$ and $n$. The aim of this article is to determine the number $\kappa(m,n)$ for any positive integer $m$ with $m\le 8$ except for the cases when $(m,n)=(3,5)$ and $(4,6)$. In the exceptional cases, we only have 
 $1\le \kappa(3,5), \ \kappa(4,6)\le 2$. We also discuss some related topics.  
 
The subsequent discussion will be conducted in the better adapted geometric language of quadratic spaces and lattices.   A $\z$-lattice $L=\z x_1+\z x_2+\dots+\z x_m$ of rank $m$ is a free $\z$-module equipped with non-degenerate symmetric bilinear form $B$ such that $B(x_i,x_j) \in \q$ for any $i,j$ with $1\le i, j \le m$. The $m\times m$ matrix $(B(x_i,x_j))$ is called the corresponding symmetric matrix to $L$, and we write
$$
L\cong \left(B(x_i,x_j)\right).
$$
The corresponding quadratic map is defined by $Q(x)=B(x,x)$ for any $x \in L$.  If $B(x_i,x_j)=0$ for any $i\neq j$, then we write $L=\langle Q(x_1),\ldots,Q(x_m) \rangle$. 

We say a $\z$-lattice $L$ is {\it positive definite} if $Q(x)>0$ for any non-zero vector $x \in L$, and we say $L$ is {\it integral} if $B(x,y) \in \z$ for any $x,y \in L$. We say $L$ is {\it non-classic integral} if $Q(x) \in \z$ for any $x \in L$. 
Throughout this article, we always assume that a $\z$-lattice is positive definite and integral, unless stated otherwise.  In Section 2, we also consider representations of even integers by an even integral $\z$-lattice, which corresponds to representations of integers by a non-classic integral $\z$-lattice. Here we say a $\z$-lattice $L$ is {\it even} if $Q(x) \in 2\z$ for any $x \in L$. Note that any even $\z$-lattice is integral.  For any positive integer $j$ not greater than $m$, the $j$-th successive minimum of $L$ will be denoted by $m_j(L)$. For the precise definition and various properties on the successive minima, one may consult Chapter 12 of \cite{Cas}.  

For two $\z$-lattices $\ell$ and $L$, we say $\ell$ is represented by $L$ if there is a linear map $\sigma : \ell \to L$ such that 
$$
B(\sigma(x),\sigma(y))=B(x,y) \quad \text{for any $x,y \in \ell$.} 
$$
Such a linear map $\sigma$ is called  {\it an isometry} from $\ell$ to $L$. 
If $\ell$ is (not) represented by $L$, then we will use the notation 
$$
\ell \ra L \quad (\ell \nra L, \text{respectively}).
$$ 

For any positive integer $m$, a $\z$-lattice $L$ is said to be {\em $m$-universal} if $L$ represents all $\z$-lattices of rank $m$.  We define
$$
\uz(m):=\min\{ \rank(L) \, : \, L \text{ is } m\text{-universal}\}.
$$
It is well known that $\uz(m)$ should be greater than or equal to $m+3$. In fact, $\uz(m)=m+3$ for any integer $m$ with $1 \le m \le 5$, and $\uz(m)=13,15,16,28,30$ for $m=6,7,8,9,10$, respectively (see \cite{O}). Furthermore, it is well known that there are only finitely many $m$-universal $\z$-lattices of minimal rank $\uz(m)$ up to isometry, and  for any $m$ with $1\le m \le 8$, the complete lists of candidates of $m$-universal $\z$-lattices with rank $\uz(m)$ can be found in \cite{B}, \cite{C}, \cite{KKO}, \cite{O}, and \cite{R}. 

As stated above, the aim of this article is to determine $\kappa(m,n)$ for any integer $m$ with $1\le m\le 8$. 
If $n \ge \uz(m)$, then there is an $m$-universal  $\z$-lattice. Hence we may naturally define $\uu[m,n]=\infty$ in this case. 
Therefore, we always assume that 
$$
m\le n \le \uz(m)-1.
$$  
For an integer $m$, we define $\mathcal P(m)$ the set of prime divisors of $m$. 

For any integers $a$ and $b$, if $ab^{-1} \in (R^{\times})^2$, then we write $a\sim b$ over $R$, where $R$ is either the $p$-adic integer ring $\z_p$ or $p$-adic field $\q_p$ for some prime $p$. 

Any unexplained notation and terminology can be found in \cite{ki2} or  \cite{om}.

\section{Quadratic forms representing finite number of positive integers}

Recall that for any positive integers $m,n$, we define $\kappa(m,n)$ the largest integer $k$ such that for any $\z$-lattices $\ell_1,\ell_2,\dots,\ell_k$ of rank $m$, there always is a $\z$-lattice of rank $n$ that represents $\ell_i$ for any $i$ with $1\le i\le k$.   
In this section, we determine $\kappa(1,n)$ for any positive integer $n$. Since there is a universal $\z$-lattice of rank $4$,
it suffices to consider the case when $1\le n\le3$.   To begin with, we start proving the following general properties on $\kappa(m,n)$.

\begin{thm}\label{thmgeneral}
Let $m$ and $n$ be positive integers such that $m\le n$, and let $\uu$ be the integer defined above. Then we have the following properties:
\begin{newenum}
     \item $\uu[m,m]=1$ for any positive integer $m$;
	\item $\uu[m+1,n+1] \le \kappa(m,n) \le \kappa(m,n+1)$;
	\item\label{thmgen:3} $\uu[1,2]=2$ and $\uu[m,m+1]=1$ for any $m\ge2$.
\end{newenum}
\end{thm}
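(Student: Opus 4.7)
The plan is to handle the three parts in order, since the proof of (iii) will reuse the monotonicity from (ii).

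For (i), the lower bound $\kappa(m,m) \ge 1$ follows from the identity map $\ell \to \ell$. For the upper bound I would exhibit two rank-$m$ lattices whose discriminants differ by a non-square factor, e.g.\ $\ell_1 = \langle 1, \ldots, 1\rangle$ and $\ell_2 = \langle 1, \ldots, 1, 2\rangle$. If a rank-$m$ lattice $L$ represented both, each image $\sigma_i(\ell_i) \subseteq L$ would have full rank, yielding $d(\ell_i) = [L:\sigma_i(\ell_i)]^2 \, d(L)$; then $d(\ell_1)/d(\ell_2) = 1/2$ would be a rational square, which is absurd.

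For (ii), the inequality $\kappa(m, n) \le \kappa(m, n+1)$ is immediate: if a rank-$n$ lattice $L$ represents each $\ell_i$, then so does $L \perp \langle 1\rangle$ of rank $n+1$. For $\kappa(m+1, n+1) \le \kappa(m, n)$, set $k = \kappa(m+1, n+1)$, take arbitrary rank-$m$ lattices $\ell_1, \ldots, \ell_k$, and form $\ell_i' := \ell_i \perp \langle 1\rangle$. By hypothesis some rank-$(n+1)$ lattice $L'$ represents every $\ell_i'$, so each isometry $\sigma_i : \ell_i' \to L'$ sends the $\langle 1\rangle$-summand to a norm-$1$ vector $u_i \in L'$ with $\sigma_i(\ell_i) \subseteq u_i^\perp$. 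The key technical step is the following lemma: in a positive definite integral $\z$-lattice, the orthogonal complements of any two norm-$1$ vectors are isometric. Granting this, picking any fixed norm-$1$ vector $u \in L'$ and setting $L := u^\perp$ yields a rank-$n$ lattice isomorphic to each $u_i^\perp$, hence representing every $\ell_i$. The lemma itself is a short case analysis: if $u' = a u + v$ with $v \in u^\perp$, the identity $1 = a^2 + Q(v)$ together with positive definiteness forces $a \in \{-1, 0, 1\}$; the cases $a = \pm 1$ give $u' = \pm u$ and equal complements, while $a = 0$ places $u'$ as a norm-$1$ vector inside $u^\perp$, so both $u^\perp$ and $(u')^\perp$ split as $\langle 1\rangle$ perpendicular to the common sublattice orthogonal to both $u$ and $u'$ in $L'$.

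For (iii), $\kappa(1,2) \ge 2$ is immediate since $\langle a, b\rangle$ represents both $\langle a\rangle$ and $\langle b\rangle$. For the matching upper bound I would use the classical triple $1, 2, 15$: any binary lattice representing $1$ splits as $\langle 1, d\rangle$; representing $2$ then forces $d \in \{1, 2\}$; and $15$ is represented neither by $x^2 + y^2$ (since $15 \equiv 3 \pmod 4$) nor by $x^2 + 2y^2$ (direct check). For $\kappa(m, m+1) = 1$ with $m \ge 2$, iterating part (ii) gives $\kappa(m, m+1) \le \kappa(m-1, m) \le \cdots \le \kappa(2, 3)$, so it suffices to show $\kappa(2, 3) \le 1$. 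I would take $\ell_1 = \langle 1, 1\rangle$ and $\ell_2 = \langle 3, 3\rangle$: any rank-$3$ lattice representing $\langle 1, 1\rangle$ has the form $\langle 1, 1, d\rangle$ for some $d \ge 1$ (by successively splitting off the two norm-$1$ vectors), and direct enumeration of the norm-$3$ vectors in $\langle 1, 1, d\rangle$ (which exist only for $d \le 3$) shows that in each such case no two of them are orthogonal.

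The main obstacle is the norm-$1$ complement lemma used in (ii); without it the $u_i^\perp$ would only be known to share their rank, not to be mutually isometric. The remaining steps are discriminant bookkeeping and small, finite lattice computations.
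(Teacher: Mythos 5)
Your proposal is correct and follows essentially the same route as the paper: the same witness pairs $I_m$, $I_{m-1}\perp\langle 2\rangle$ for (i) and $1,2,15$ for $\kappa(1,2)$, and the same splitting-off-$\langle 1\rangle$ argument for (ii) (where your norm-$1$ complement lemma correctly supplies a detail the paper leaves implicit). The only real divergence is in the last claim of (iii): the paper shows directly that $\langle 3,3\rangle$ is not represented by $I_2\perp\langle a\rangle$ over $\q_2$, whereas you reduce to $\kappa(2,3)$ via (ii) and enumerate norm-$3$ vectors in $\langle 1,1,d\rangle$ — both verifications are valid.
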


\begin{proof}
For the proof of (i), note that there does not exist a $\z$-lattice of rank $m$ that represents both $I_m$ and $I_{m-1}\perp \langle 2\rangle$ simultaneously, where $I_m$ is the $\z$-lattice of rank $m$ whose corresponding symmetric matrix is the identity matrix.    

To prove the first inequality of (ii), let $k=\kappa(m+1,n+1)$. Let $\ell_1,\ldots,\ell_k$ be any $\z$-lattices of rank $m$. From the definition of $\uu[m+1,n+1]$, there exists a $\z$-lattice $L$ of rank $n+1$ which represents $\df{1}\perp\ell_i$ for any $1 \le i \le k$. Since the $\z$-lattice $L$ represents $1$, there is a $\z$-sublattice $L'$ of $L$ such that  $L=\df{1}\perp L'$. Note that the $\z$-lattice $L'$ represents $\ell_i$ for any $i=1,2,\dots,k$. Therefore, we have $\uu[m+1,n+1] \le \uu$. The second inequality is almost trivial.

Now, we prove (iii). Note that for any positive integers $a$ and $b$, the unary $\z$-lattices $\z$-lattices $\df{a}$ and $\df{b}$ are represented by the binary $\z$-lattice $\df{a,b}$. One may easily check that there does not exist a binary $\z$-lattice representing $1,2$, and $15$. Therefore, we have $\kappa(1,2)=2$. To prove the second assertion, suppose that there is a $\z$-lattice, say $L$, of rank $m+1$ that represents both $I_m$ and $I_{m-2}\perp \langle 3,3\rangle$. Then $L \simeq I_{m}\perp \langle a\rangle$ for some positive integer $a$. However, $\langle 3,3\rangle$ is not represented by $I_2 \perp \langle a \rangle$ over $\mathbb Q_2$ for any positive integer $a$. Therefore, we have $\kappa(m,m+1)=1$ for any positive integer $m \ge 2$.      
\end{proof}

In fact, we have the following proposition more general than the first part of \ref{thmgen:3} in Theorem \ref{thmgeneral}.
 
\begin{prop}
For any positive integers $a$ and $b$ which are not contained in the same square class, there are infinitely many positive integers $c$ such that no binary $\z$-lattice represents $a,b$, and $c$ simultaneously.
\end{prop}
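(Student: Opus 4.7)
The plan is to combine a finiteness argument (which uses the square-class hypothesis in an essential way) with a quadratic-residue obstruction at odd primes and Chebotarev's density theorem.

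First, I would prove that only finitely many binary integral $\z$-lattices represent both $a$ and $b$, up to isometry. Assume without loss of generality that $a \le b$. By Minkowski reduction, any such $L$ has a Gram matrix $\begin{pmatrix} A & B \\ B & C \end{pmatrix}$ with $|2B| \le A \le C$; since $L$ represents $a$, one has $A = m_1(L) \le a$, leaving finitely many pairs $(A,B)$. Multiplying an equation $Ax^2 + 2Bxy + Cy^2 = n$ by $A$ yields $An = (Ax + By)^2 + (AC - B^2)y^2$, so if $y \neq 0$ then $C \le n + B^2/A \le \tfrac{5}{4}n$. If both representations (of $a$ and of $b$) were forced to use $y = 0$, then $a/A$ and $b/A$ would be perfect squares, making $ab$ a square and contradicting the hypothesis. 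Therefore $C \le \tfrac{5}{4}b$, and only finitely many lattices $L_1, \dots, L_N$ survive.

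Next, for each $L_i$ with Gram matrix $\begin{pmatrix} A_i & B_i \\ B_i & C_i \end{pmatrix}$ and determinant $D_i = A_iC_i - B_i^2 > 0$, the identity $A_ip = (A_ix + B_iy)^2 + D_iy^2$ shows that if an odd prime $p$ coprime to $A_iD_i$ is represented by $L_i$, then $p \nmid y$ (otherwise $p \mid x$ as well, forcing $p^2 \mid A_ip$), and reducing mod $p$ gives $\legendre{-D_i}{p} = 1$.

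Finally, I would apply Chebotarev's density theorem to the multiquadratic field $K = \q(\sqrt{-D_1}, \dots, \sqrt{-D_N})$. Because each $-D_i < 0$, complex conjugation $\tau \in \text{Gal}(K/\q)$ sends every $\sqrt{-D_i}$ to $-\sqrt{-D_i}$, so the Galois-theoretic condition "$\legendre{-D_i}{\cdot} = -1$ for all $i$" is realized by a nonempty conjugacy class. Chebotarev then furnishes infinitely many primes $p$, coprime to $2\prod_i A_iD_i$, with $\legendre{-D_i}{p} = -1$ for every $i$. By the previous step no such $p$ is represented by any $L_i$, so no binary integral $\z$-lattice represents $a$, $b$, and $c = p$ simultaneously; this yields infinitely many valid $c$.

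The chief difficulty is the first step: the bound on $C$ fails precisely when $a$ and $b$ share a square class (e.g.\ $a = 1$, $b = 4$), in which case the infinite family $\df{1,d}$ for $d \ge 1$ collectively represents every positive integer, so the conclusion itself would fail.
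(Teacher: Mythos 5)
Your proof is correct and follows essentially the same route as the paper's: first show that only finitely many binary $\z$-lattices represent both $a$ and $b$ (the square-class hypothesis entering exactly to rule out the degenerate "both representations diagonal'' case), then produce infinitely many primes $p$ that are simultaneous non-residue obstructions $\legendre{-D_i}{p}=-1$ for all of their discriminants. The paper obtains the finiteness more quickly from successive minima ($dL\le m_1(L)m_2(L)\le ab$) and cites CRT plus Dirichlet where you invoke Chebotarev, but these are implementation differences; your explicit observation that the negativity of every $-D_i$ guarantees the consistency of the conditions $\legendre{-D_i}{p}=-1$ makes precise a point the paper leaves implicit.
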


\begin{proof} 
Assume that a binary $\z$-lattice $L$ represents both $a$ and $b$, and assume that $Q(x)=a$ and $Q(y)=b$ for some $x,y\in L$. Then two vectors $x$ and $y$ are linearly independent by the hypothesis of the lemma. Hence we have $m_1(L) \le \min (a,b)$ and $m_2(L) \le \max(a,b)$. Therefore, we have $dL\le m_1(L)\cdot m_2(L) \le ab$, and hence there are only finitely many $\z$-lattices of rank $2$, up to isometry,  that represents both $a$ and $b$.
	
Now, let $L_1,\ldots,L_t$ be all such binary $\z$-lattices up to isometry. Let $p$ be any prime such that 
$$
\legendre{-dL_i}{p}=-1 \quad \text{for any } 1\le i \le t,
$$
where $\legendre{ \, \cdot \, }{\cdot}$ denotes the Legendre symbol. Then $p$ is not represented by $(L_i)_p\cong\df{1,-\Delta_p}$ over $\z_p$ for any $i$, where $\Delta_p$ is a non-square unit in $\z_p^\times$. Hence it is not represented by $L_i$ over $\z$ for any $i=1,2,\dots,t$. The lemma follows from this and the fact that there are infinitely many such primes $p$ by the Chinese Remainder Theorem and the Dirichlet's theorem on arithmetic progressions.
\end{proof}

\begin{thm} \label{thm31}
We have $\uu[1,3]=6$.	
\end{thm}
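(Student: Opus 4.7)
The proof of $\uu[1,3]=6$ splits into two bounds: $\uu[1,3] \le 6$ and $\uu[1,3] \ge 6$.

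For the upper bound $\uu[1,3] \le 6$, the plan is to exhibit an explicit set of seven positive integers $\{a_1, \ldots, a_7\}$ with no ternary $\z$-lattice representing all of them. A natural choice includes a few small integers (e.g., starting with $1, 2, 3$), which by Minkowski-type bounds on successive minima forces any representing ternary $L$ to have bounded discriminant $dL$. This reduces to a finite list of candidate ternaries up to isometry; the remaining entries of the tuple are then selected so that each candidate misses at least one of them, typically by exploiting local obstructions at small primes. Including $a_1 = 1$ is particularly convenient, since any candidate $L$ then splits as $L \cong \langle 1\rangle \perp L'$ for a binary $L'$, and one analyzes the representations by $\langle 1\rangle \perp L'$ of the remaining six integers via the identity $a_i = s^2 + Q'(t,u)$, which further reduces the problem to a finite enumeration of binary $L'$.

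For the lower bound $\uu[1,3] \ge 6$, given any six positive integers $a_1 \le \cdots \le a_6$, I would construct a ternary representing all of them. The strategy is to invoke the classical list of ``almost universal'' diagonal ternaries: each such form represents all positive integers except an explicit arithmetic family. For example, $\langle 1,1,1\rangle$ misses only $4^a(8b+7)$, $\langle 1,1,2\rangle$ misses only $4^a(16b+14)$, $\langle 1,2,3\rangle$ misses only $4^a(16b+10)$, and analogous statements hold for a short list of other diagonal ternaries. For the given sextuple, one identifies which exceptional families are hit; if some almost-universal ternary avoids all six $a_i$ in its exceptional family, that ternary represents them all. When several exceptional families are simultaneously hit, a tailored construction (for instance, a ternary of the form $\langle a_1 \rangle \perp L'$ with a suitably chosen binary $L'$ representing $a_i - s_i^2 a_1$ for appropriate $s_i$) is required instead.

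The main obstacle is the lower bound, specifically the cases in which the six integers collectively hit multiple exceptional families, forcing a construction of a ternary beyond the standard almost-universal list. Establishing the existence of a suitable binary $L'$ in such cases requires local analysis at primes dividing $2a_1\cdots a_6$, together with a Hasse-principle argument ensuring a global lattice with the required local behavior; organizing the case distinctions (by the $2$-adic and odd-adic structure of the $a_i$) into a complete and uniform proof is the technical heart of the argument.
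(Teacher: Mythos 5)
Your upper bound is essentially the paper's: the paper shows that no ternary $\z$-lattice represents $1,2,3,5,10,14,15$ by splitting off $\df{1}$ and reducing to a finite check over eleven candidate binary complements $L'$, exactly as you outline. That half is fine.

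The lower bound, however, has a genuine gap. The decisive point, which your proposal misses, is that one can choose \emph{seven} class-number-one ternary lattices whose exceptional sets are \emph{pairwise disjoint} arithmetic families, namely $2^{2s}(8t+1)$, $2^{2s+1}(8t+1)$, $2^{2s}(8t+3)$, $2^{2s}(8t+5)$, $2^{2s+1}(8t+5)$, $2^{2s}(8t+7)$, and $2^{2s+1}(8t+7)$ (each family is determined by the parity of $\ord_2(n)$ and the residue mod $8$ of the odd part, so no integer lies in two of them). Hence any six integers meet at most six of the seven families, and pigeonhole immediately yields one of the seven lattices representing all six; no further construction is ever needed. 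Your version instead concedes that ``when several exceptional families are simultaneously hit, a tailored construction is required'' and proposes to supply it via local analysis plus a Hasse-principle argument. That step is not carried out, and as described it would not go through: representation of integers by a ternary form is not governed by local conditions alone (the local-global principle fails in rank $3$, owing to spinor exceptions and sporadic exceptions for forms of class number greater than one), so exhibiting a binary $L'$ with the right behavior over every $\z_p$ does not produce a ternary $\df{a_1}\perp L'$ that globally represents the six targets. The fix is not a harder local computation but a better choice of the seven forms, so that disjointness of their exceptional sets renders the ``hard case'' vacuous.
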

\begin{proof}
Consider the following seven ternary $\z$-lattices:
$$
L(1)=\begin{pmatrix} 2&1&0 \\ 1&2&1 \\ 0&1&3 \end{pmatrix}, \
L(2)=\df{1}\perp \begin{pmatrix} 3&1\\ 1&5 \end{pmatrix}, \
L(3)=\df{1,1,5}, \
$$
$$
L(4)=\df{1}\perp \begin{pmatrix} 2&1\\ 1&2 \end{pmatrix}, \
L(5)=\df{1,2,3}, \
L(6)=\df{1,1,1}, \  \text{and} \ 
L(7)=\df{1,1,2}. \
$$
Note that each of these seven $\z$-lattices has class number $1$, and one may easily check that $L(1),\ldots,L(7)$ represent all positive integers except for the integers of the form 
$$
2^{2s}(8t+1), \ 2^{2s+1}(8t+1), \ 2^{2s}(8t+3), 
$$
$$
\ 2^{2s}(8t+5), \ 2^{2s+1}(8t+5), \ 2^{2s}(8t+7), \ \text{and} \  2^{2s+1}(8t+7),
$$
respectively, where $s$ and $t$ run over all non-negative integers.
Therefore, for any set $S$ of six positive integers, at least one of $L(1),\ldots,L(7)$ represents all integers in the set $S$. This proves that $\uu[1,3]\ge 6$.

On the other hand, we claim that there is no ternary $\z$-lattice that represents $1,2,3,5,10,14$, and $15$ simultaneously. Assume to the contrary that there is a ternary $\z$-lattice, say $L$, which represents all of these seven integers. Since $L$ represents $1$, $L=\df{1}\perp L'$ for some binary $\z$-lattice $L'$. In order for $L$ to represent $2$, the first successive minimum $m_1(L')$ of $L'$ should be less than or equal to $2$. Furthermore, in order for $L$ to represent $3$ or $5$, $L'$ should be isometric to one of the following $11$ binary $\z$-lattices:
\begin{center}
$\df{1,1}, \ \df{1,2}, \ \df{1,3}, \ \df{2,2}, \ \df{2,3}, \ \df{2,4}, \ \df{2,5},$\\ [5pt]
$
\ \begin{pmatrix} 2&1 \\ 1&2 \end{pmatrix}, 
\ \begin{pmatrix} 2&1 \\ 1&3 \end{pmatrix}, 
\ \begin{pmatrix} 2&1 \\ 1&4 \end{pmatrix}, \ \text{and} 
\ \begin{pmatrix} 2&1 \\ 1&5 \end{pmatrix}.
$	
\end{center}

One may easily check that none of $11$ $\z$-lattices of the form $\df{1}\perp L'$ represents integers $1,2,3,5,10,14$, and $15$ simultaneously. Therefore, we have $\uu[1,3]=6$.
\end{proof}

The following proposition says that in many cases, there is a ternary $\z$-lattice that represents all of seven integers given in advance. 
 
\begin{prop}\label{prop31}
Let $A=\{a_1,\ldots,a_7\}$ be a set of positive integers such that there does not exist a ternary $\z$-lattice representing all of integers in the set $A$. Then we have the followings:
\begin{newenum}
\item For any $i\ne j$, $a_i \not \sim a_j$ over $\q_2$, and $a_i \not \sim 6$ over $\q_2$ for any $i=1,2,\dots,7$;
\item  For some $i$ and $j$, $a_i \sim 3$ and $a_j \sim 6$ over $\q_3$. 
\item  For some $i$ and $j$, $a_i \sim 5$ and $a_j \sim 10$ over $\q_5$. 
\item\label{31cond:4}  For some $i$, $a_i$ is odd. Moreover, under the GRH, it belongs to
$$
\{3, 7, 21, 31, 33, 43, 67, 79, 87, 133, 217, 219, 223, 253, 307, 391, 679, 2719\}.
$$
\end{newenum}
\end{prop}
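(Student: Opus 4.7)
The strategy is uniform across the four parts: for each stated condition, we exhibit (or invoke) a ternary $\z$-lattice whose exceptional set of unrepresented positive integers is confined to a specific $p$-adic square class (or to a small explicit list of odd integers). If $A$ were to violate the conclusion, no element of $A$ would lie in that exceptional set, so the lattice would represent all of $A$, contradicting the hypothesis that $A$ is unrepresentable by any ternary $\z$-lattice.

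For (i), one notes that the seven exceptional sets of $L(1),\ldots,L(7)$ from the proof of Theorem~\ref{thm31} are pairwise disjoint and correspond bijectively to the seven 2-adic square classes $\{1,2,3,5,7,10,14\}$, while every integer $\sim 6$ over $\q_2$ is represented by each $L(k)$. Since each $L(k)$ must miss some $a_j \in A$ and $|A|=7$, the pigeonhole principle places exactly one $a_j$ in each of the seven listed 2-adic classes and none $\sim 6$ over $\q_2$. The first assertion of \ref{31cond:4} follows immediately as a corollary: only three of those seven classes ($2$, $10$, $14$) consist of even integers, so at least four of the $a_j$'s must be odd.

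For (ii) and (iii), one exhibits, for each of the four specified local classes ($3$ and $6$ over $\q_3$; $5$ and $10$ over $\q_5$), a regular (or class-number-one) ternary whose exceptional set lies entirely in that class. For instance $\df{1,1,3}$ has exceptional set $\{9^a(9b+6)\}$, which is exactly the class $\sim 6$ over $\q_3$; the three remaining lattices can be extracted from the classical Jagy--Kaplansky--Schiemann list of regular ternaries and verified locally via a Jordan-decomposition computation. If $A$ were to avoid any of those four local classes, the corresponding ternary would represent all of $A$, contradicting the hypothesis.

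The refinement in \ref{31cond:4} under GRH is the main obstacle. For each of the four odd 2-adic classes $\{1,3,5,7\}$, one assembles a short family of ternary lattices which together represent every sufficiently large odd integer in that class; under GRH, effective Duke--Schulze-Pillot type bounds on the cuspidal part of the associated theta series provide an explicit cutoff, so that the collection of odd integers not represented by the family is finite. A finite numerical verification, combined with the 2-adic, 3-adic, and 5-adic constraints already established in (i)--(iii), then reduces the possible odd values of $a_i$ to the stated 18-element list. The hardest step is assembling the right family of ternaries and executing the effective analytic estimates together with the numerical sieve up to the GRH-dependent cutoff.
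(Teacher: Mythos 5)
Parts (i)--(iii) of your proposal are essentially the paper's argument: the seven class-number-one ternaries $L(1),\ldots,L(7)$ each miss exactly one of the $2$-adic square classes $\{1,2,3,5,7,10,14\}$, and the four lattices $\df{1,1,6}$, $\df{1,1,3}$, $\df{1}\perp\begin{pmatrix}2&1\\1&3\end{pmatrix}$, $\df{1,2,5}$ handle the $3$-adic and $5$-adic conditions. That part is fine.

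Part \ref{31cond:4} has a genuine gap, in two places. First, your shortcut ``only three of those seven classes consist of even integers, so at least four of the $a_j$'s must be odd'' rests on a false premise: the $\q_2$-square classes of $1,3,5,7$ contain plenty of even integers ($4\sim 1$, $12\sim 3$, $20\sim 5$, $28\sim 7$ over $\q_2$), so membership in an odd square class does not force oddness, and the assertion that some $a_i$ is odd does not follow from (i) by pigeonhole. The paper instead uses the single Ramanujan lattice $N=\df{1,1,10}$: it represents every positive even integer except those of the form $2^{2s+1}(8t+3)$, i.e.\ those $\sim 6$ over $\q_2$, which $A$ excludes by (i); hence $N$ represents all even elements of $A$, so the element of $A$ it fails to represent must be odd. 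Second, your plan for the GRH refinement (families of ternaries per odd class plus effective Duke--Schulze-Pillot bounds and a numerical sieve) is unexecuted and would not in any evident way produce the specific $18$-element list, which is not an artifact of such a sieve: it is precisely the Ono--Soundararajan (GRH-conditional) list of odd integers not represented by $x^2+y^2+10z^2$. Once one knows that the failing element of $A$ for $N$ is odd, citing that theorem finishes the proof in one line; without identifying $N$ and that result, the stated list is unjustified.
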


\begin{proof} If the set $A$ does not satisfy the condition (i), then for some $i=1,2,\dots,7$ the ternary $\z$-lattice $L(i)$
 in Theorem \ref{thm31} represents all integers in the set $A$.   

Now, consider the following four ternary $\z$-lattices:
$$
M(1)=\df{1,1,6}, \, M(2)=\df{1,1,3}, \,
M(3)=\df{1}\perp\begin{pmatrix}
2&1\\1&3
\end{pmatrix}, \, 
\text{and} \ M(4)=\df{1,2,5}.
$$
Each of them has class number $1$, and one may easily check that $M(1),M(2),M(3)$, and $M(4)$ represents all positive integers except for the integers of the form 
$$
3^{2s+1}(3t+1), \ 3^{2s+1}(3t+2), \ 5^{2s+1}(5t\pm1), \ 5^{2s+1}(5t\pm2),
$$
respectively. This proves the assertions (ii) and (iii). To prove the assertion \ref{31cond:4}, let us consider the ternary $\z$-lattice $N=\df{1,1,10}$ which corresponds to the Ramanujan's ternary quadratic form. Note that $N$ represents all positive even integers except for those of the form $2^{2s+1}(8t+3)$. Since $A$ does not contain an even integer of the form $2^{2s+1}(8t+3)$ by (i), $N$ represents all even integers in the set $A$. Moreover, under the GRH, all odd integers which are not represented by $N$ are those integers given above (see \cite{OS}). This completes the proof of \ref{31cond:4}.
\end{proof}

In fact, the infinitude of the ring $R(m_1,m_2,m_3)$ defined in the introduction comes from the existence of a non-classic integral binary $\z$-lattice representing $m_1$, $m_2$, and $m_3$. Note that  an integer $a$ is represented by  a non-classic integral $\z$-lattice $L$ if and only if $2a$ is represented by the even $\z$-lattice $L^2$. Here $L^2$ is the $\z$-lattice obtained from $L$ by scaling $2$. Furthermore, for any integral $\z$-lattice $M$, an even integer $2b$ is represented by $M$ if and only if  $2b$ is represented by 
$$
M(e):=\{ x \in L : Q(x) \equiv 0 \Mod 2\},
$$ 
which is an even integral $\z$-lattice.  
Therefore, to deal with a non-classic integral case, we consider the representations of even integers by an integral $\z$-lattice.      

\begin{thm} \label{even-c}
	{\rm (1)} For any subset $A=\{a_1,\ldots,a_7\}$ of even positive integers, there is a ternary $\z$-lattice that represents all integers in the set $A$.\\
	{\rm (2)} For a set $P=\{7,11,13,17,23,29,31,37,39\}$ of prime numbers, let
$$
	N=N_{\alpha}=4\alpha\cdot\prod_{p\in P} p,
$$
	where $\alpha$ is a positive integer satisfying
$$
	\alpha \equiv -\prod_{p\in P} p \Mod{8} \  \text{and} \
	\legendre{\alpha}{p}=
	\begin{cases}
	\displaystyle\prod_{q \in P-\{p\}}\legendre{q}{p} & \text{if } p\in \{11,17\},\\
	-\displaystyle\prod_{q \in P-\{p\}}\legendre{q}{p} & \text{if } p\in P-\{11,17\}.
	\end{cases}
$$
Then there does not exist ternary $\z$-lattice representing
$2,4,6,10,12,14,20$, and  $N$ simultaneously.  
\end{thm}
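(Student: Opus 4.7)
For part (1), my plan is to classify each positive even integer $a=2^v u$ (with $u$ odd and $v\ge 1$) by the pair $(v\bmod 2,\,u\bmod 8)$, which yields eight types. Restricted to even integers, the bad classes of the seven lattices $L(1),\ldots,L(7)$ from Theorem~\ref{thm31} correspond to seven of these eight types; the remaining type $(v\text{ odd},\,u\equiv 3\pmod 8)$ is exactly the bad class of the Ramanujan ternary form $R=\df{1,1,10}$, as invoked in the proof of Proposition~\ref{prop31}. Given any seven even positive integers $a_1,\ldots,a_7$, either some bad type of $L(1),\ldots,L(7)$ is missed by $\{a_1,\ldots,a_7\}$, in which case the corresponding $L(i)$ represents all seven; or each of the seven bad types is hit exactly once, in which case none of the $a_j$ lies in the bad type of $R$ and $R$ represents all seven.

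For part (2), I would argue by contradiction: suppose a ternary $\z$-lattice $L$ represents $\{2,4,6,10,12,14,20,N\}$. The square classes of the seven small integers in $\q_2^\times/(\q_2^\times)^2$ are $\{1,2,3,5,6,10,14\}$, and the congruence $\alpha\equiv -\prod_{p\in P}p\pmod 8$ (with $\prod_{p\in P}p\equiv 7\pmod 8$ by direct computation) places $N$ in the square class of $7$; hence the eight integers cover all of $\q_2^\times/(\q_2^\times)^2$, forcing $L$ to be isotropic at $2$. Combined with the bounds on successive minima of $L$ coming from the representation of $2,4,\ldots,20$ (in particular $m_1(L)\le 2$), this confines $L$ to an explicit finite list of candidate ternary lattices, obtained by splitting off a minimal vector and enumerating the binary orthogonal complements that still represent the prescribed shifts of each small integer.

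For each candidate $L$, I would compute its Jordan decomposition at each prime $p\in P$ and derive the local criterion for $L_p\to N$: specifically, local representability forces a Legendre symbol condition of the form
\[
    \legendre{\alpha}{p}\;=\;\varepsilon_L(p)\prod_{q\in P-\{p\}}\legendre{q}{p},
\]
with sign $\varepsilon_L(p)\in\{\pm 1\}$ dictated by the Jordan blocks of $L$ at $p$. The split $p\in\{11,17\}$ versus $p\in P-\{11,17\}$ and the assigned signs in the theorem's hypothesis on $\alpha$ are chosen precisely so that for every candidate $L$ and every $p\in P$, the prescribed value of $\legendre{\alpha}{p}$ equals $-\varepsilon_L(p)$. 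Hence every candidate fails to represent $N$ locally at some prime in $P$, yielding the desired contradiction.

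The main obstacle is producing the explicit list of candidate ternary lattices representing $\{2,4,6,10,12,14,20\}$ and then matching each candidate with its obstruction prime. This requires a careful case analysis on orthogonal decompositions $L\cong\df{a}\perp L'$ with $a\in\{1,2\}$ (including the even-only case $a=2$, possibly non-split) and a Jordan-form computation at every odd prime of $P$ for every candidate, to verify that the resulting signs $\varepsilon_L(p)$ indeed align with the obstruction prescribed in the theorem's hypothesis on $\alpha$.
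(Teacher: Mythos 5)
Your part (1) is correct and is essentially the paper's argument: the paper deduces it from Proposition \ref{prop31}\ref{31cond:4}, whose proof is exactly your observation that the bad classes of $L(1),\ldots,L(7)$ together with the bad class $2^{2s+1}(8t+3)$ of $\df{1,1,10}$ partition the even integers into eight types, so seven even integers always avoid the exceptional set of one of these eight class-number-one lattices.

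Part (2), however, has a genuine gap: everything that actually constitutes the proof is deferred. You correctly identify that the eight integers cover all of $\q_2^\times/(\q_2^\times)^2$ and that one must reduce to a finite list of candidates and kill each one locally, but you do not produce the list, and your proposed reduction is flawed in two ways. First, you cannot in general ``split off a minimal vector'': when $m_1(L)=2$ the lattice need not decompose as $\df{2}\perp L'$. The paper instead builds a full-rank sublattice $L'=\z x_1+\z x_2+\z x_3$ with $Q(x_1)\in\{1,2\}$, $Q(x_2)\in\{2,4\}$, $Q(x_3)\in\{6,10\}$ (using that each of the five possible binary sections $\z x_1+\z x_2$ misses $6$ or $10$), arriving at $52$ candidates for $L'$; it must then separately handle the possibility $L'\subsetneq L$ when $dL'$ is not square-free. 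This step is not cosmetic: it produces the exceptional case $L'\cong\df{2}\perp\begin{pmatrix}4&2\\2&8\end{pmatrix}$, $L\cong\df{2}\perp\begin{pmatrix}2&1\\1&4\end{pmatrix}$, where $L$ does represent $14$ even though $L'$ does not, and which is only eliminated because $L$ fails to represent $N$ over $\z_7$; your scheme would miss it. Second, your claim that the hypotheses on $\alpha$ force $\legendre{\alpha}{p}=-\varepsilon_L(p)$ for \emph{every} candidate and \emph{every} $p\in P$ is too strong and is not how the obstruction works: of the $52$ candidates, $34$ already fail on one of $10,12,14,20$, nine are sublattices of $\df{1,1,1}$ and are excluded because $N=4(8k+7)$, and only the remaining nine are killed by local conditions at particular primes of $P$ (roughly one prime per lattice, which is why $P$ has nine elements). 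Without the explicit enumeration and the prime-by-prime verification for those nine lattices, the argument is an outline rather than a proof.
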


\begin{proof}
	The assertion (1) follows immediately from \ref{31cond:4} of Proposition \ref{prop31}.
	To prove the assertion (2), assume that there is a ternary $\z$-lattice $L$ representing all of the integers given above. Let $x_1 \in L$ be a vector such that $Q(x_1)=m_1(L)$. Since $L$ represents $2$, we have $Q(x_1)\le 2$.
	
	If $Q(x_1)=1$, then $\z x_1$ does not represent $2$. Hence, $L$ contains a vector $x_2\notin \z x_1$ such that $Q(x_2)=2$. If $Q(x_1)=2$, then $\z x_1$ does not represent $4$, and hence $L$ contains a vector $x_2 \notin \z x_1$ such that $Q(x_2)=4$.  Therefore, $L$ contains a binary $\z$-lattice $\z x_1 + \z x_2$ which is isometric to one of the following binary $\z$-lattices:
	$$
	\begin{pmatrix} 1&0 \\ 0&2 \end{pmatrix}, 
	\ \begin{pmatrix} 1&1 \\ 1&2 \end{pmatrix}, 
	\ \begin{pmatrix} 2&0 \\ 0&4 \end{pmatrix}, 
	\ \begin{pmatrix} 2&1 \\ 1&4 \end{pmatrix},
	\ \text{and} \ \begin{pmatrix} 2&2 \\ 2&4 \end{pmatrix}.
	$$
	Note that each of the above five binary $\z$-lattices does not represent $10,6,10,6$, and $6$, respectively. Hence, $L$ contains a vector $x_3\notin \z x_1+ \z x_2$ such that $Q(x_3)=6$ or $10$. We define a sublattice $L'=\z x_1+\z x_2+\z x_3$ of $L$. For example, if $L$ contains the binary $\z$-lattice $\langle 1,2\rangle$, then
$$
L'=\begin{pmatrix}
1&0&a\\0&2&b\\a&b&	10
\end{pmatrix} \ \ \text{for some integers $a$ and $b$.}
$$ 
Since $L'$ is positive definite, $dL'=20-2a^2-b^2>0$. Therefore, all possible candidates for $(a,b)$ are, up to isometry, 
$$
\begin{array}{rl}
(a,b)=\!\!\!\!&(0,0),(0,1),(0,2),(0,3),(0,4),(1,0),(1,1),(1,2),\\
&(1,3),(1,4),(2,0),(2,1),(2,3),(3,0),(3,1).
\end{array}
$$
By considering all possible cases, one may show that there are exactly $52$ candidates for $L'$ up to isometry.
	Note that if the discriminant of $L'$ is square-free, then we have $L=L'$.
	However, if the discriminant of $L'$ is not square-free, $L$ could be a ternary $\z$-lattice properly containing $L'$.
	
	Among those $52$ candidates, there are exactly $34$ lattices which do not represent one of the integers $10,12,14$, and $20$. Furthermore, one may show that any $\z$-lattice properly containing one of those $34$ lattices does not represent the same integer except for the following the only one case:
$$
L'\cong\df{2}\perp \begin{pmatrix} 4&2 \\ 2&8 \end{pmatrix} \quad \text{and}\quad  L\cong \df{2}\perp \begin{pmatrix} 2&1 \\ 1&4 \end{pmatrix}.
$$
In this exceptional case, the $\z$-lattice $L$ does represent $14$, though $L'$ does not represent $14$. However, $L$ does not represent $N$ over $\z_7$ and therefore it does not represent $N$ over $\z$.
	
 Among the remaining  $18(=52-34)$ $\z$-lattices, exactly nine $\z$-lattices are  sublattices of $\df{1,1,1}$.  Those nine $\z$-lattices do not represent the integer $N$, for $\df{1,1,1}$ does not represent any integer of the form $4\cdot (8k+7)$.
Finally, the remaining $9(=18-9)$ $\z$-lattices are one of the followings:
$$
\df{2}\perp \begin{pmatrix} 2&1 \\ 1&4 \end{pmatrix},
\ \df{2}\perp \begin{pmatrix} 2&1 \\ 1&6 \end{pmatrix},
\ \begin{pmatrix} 2&1&0 \\ 1&4&1 \\ 0&1&4 \end{pmatrix},
\ \begin{pmatrix} 2&1&1 \\ 1&4&2 \\ 1&2&6 \end{pmatrix},
\ \df{2}\perp \begin{pmatrix} 4&1 \\ 1&6 \end{pmatrix},
$$
$$
\begin{pmatrix} 2&0&1 \\ 0&4&1 \\ 1&1&8 \end{pmatrix},
\ \df{2}\perp \begin{pmatrix} 4&1 \\ 1&8 \end{pmatrix},
\ \begin{pmatrix} 2&0&1 \\ 0&4&1 \\ 1&1&10 \end{pmatrix}, \ \ \text{and} \ \ 
\ \df{2}\perp \begin{pmatrix} 4&1 \\ 1&10 \end{pmatrix}.
$$
One may directly check that all these $\z$-lattices do not represent the integer $N$ locally. This completes the proof.
\end{proof}

\section{Binary case}
In this section, we consider the binary case. We also introduce the notion that a pair of $\z$-lattices of rank $m$ is ``buried in rank $n$" for some positive integers $m$ and $n$ with $m\le n$, and deal with some related problems. In the previous section, we have proved that $\kappa(2,2)=\kappa(2,3)=1$. To compute the value $\kappa(2,4)$, we need some lemmas.  

\begin{lem} \label{f-bin} For any binary $\z$-lattice $\ell$, there are infinitely many isometry classes of binary $\z$-lattices $\ell'$ such that the number quaternary $\z$-lattices representing both $\ell$ and $\ell'$ is finite up to isometry. 
\end{lem}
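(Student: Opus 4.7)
My strategy is to exhibit an infinite family of candidates $\ell'_p = \df{p,p}$ indexed by primes $p$ in a suitable set, and to show that for each admissible $p$ only finitely many quaternary $\z$-lattices represent both $\ell$ and $\ell'_p$.

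Set $d = d\ell$ and let $\mathcal{N}$ denote the set of isometry classes of binary $\z$-lattices $N$ satisfying $\ell \subseteq N$ in $\q\ell$; since any such $N$ satisfies $dN \mid d$, the set $\mathcal{N}$ is finite. I will restrict attention to primes $p$ satisfying
\begin{newenum}
\item $p \nmid d$,
\item $p \equiv 3 \pmod 4$,
\item $-d$ is a non-square modulo $p$, and
\item $p$ is not represented by any $N \in \mathcal{N}$.
\end{newenum}
Conditions (i)--(iii) pick out a positive-density set of primes by Dirichlet and quadratic reciprocity, while (iv) excludes a finite union of density-$\le \tfrac12$ sets (by genus theory for positive-definite binary forms), so their intersection is still infinite. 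Since the discriminants $p^2$ are distinct, the resulting $\ell'_p = \df{p,p}$ are pairwise non-isometric.

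Fix such a $p$ and suppose $L$ is a quaternary $\z$-lattice admitting isometries $\sigma\colon\ell\to L$ and $\sigma'\colon\ell'_p\to L$. Let $u_1,u_2,v_1,v_2 \in L$ be the images of the standard bases and set $M = \sigma(\ell) + \sigma'(\ell'_p)$, of rank $r \in \{2,3,4\}$. If $r = 2$, then $M$ is a binary overlattice of $\sigma(\ell)\cong \ell$ containing $\sigma'(\ell'_p)$, so $M$ corresponds via $\sigma^{-1}$ to some $N \in \mathcal{N}$ that represents $p$, violating (iv). If $r = 4$, Hadamard's inequality on the Gram matrix of $(u_1, u_2, v_1, v_2)$ gives $dM \le Q(u_1)Q(u_2)\cdot p^2$, and $L\supseteq M$ forces $dL \le dM$, so $L$ has discriminant bounded in terms of $\ell$ and $p$ only, and thus lies in a finite set of isometry classes.

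The main obstacle is ruling out $r = 3$. Here the rational spans of $\sigma(\ell)$ and $\sigma'(\ell'_p)$ intersect in a line, yielding integers $a,b,c,d$ with $(a,b), (c,d) \ne (0,0)$ and $au_1+bu_2 = cv_1+dv_2$; taking norms gives $Q(au_1+bu_2) = p(c^2+d^2)$, so $\ell$ globally represents $p(c^2+d^2)$, hence locally at $p$. I will derive a contradiction $p$-adically: since $p\equiv 3\pmod 4$, any congruence $p\mid c^2+d^2$ forces $p\mid c$ and $p\mid d$, so $v_p(c^2+d^2)$ is always even and $v_p(p(c^2+d^2))$ is odd. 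On the other hand, conditions (i) and (iii) make $\ell\otimes\z_p$ a $\z_p$-unimodular anisotropic binary lattice, and a direct computation shows that such a lattice represents only integers of even $p$-adic valuation --- a contradiction. Taking both diagonal entries of $\ell'_p$ equal to the \emph{same} prime $p$ (rather than two distinct primes) is precisely what keeps this valuation-parity obstruction uniform across all cross-terms $c^2+d^2$; this is the delicate point of the argument.
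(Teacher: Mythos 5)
Your construction coincides with the paper's --- the family $\df{p,p}$ for primes $p\equiv 3\Mod{4}$ with $\legendre{d\ell}{p}=1$ (your (ii) and (iii) are together equivalent to this, since $\legendre{-1}{p}=-1$) --- but your finiteness argument takes a genuinely different and more self-contained route. The paper fixes a Minkowski-reduced basis $\{x_i\}$ of $L$ and argues via successive minima: either $\ell$ is not represented by the ternary section $\z x_1+\z x_2+\z x_3$, in which case $m_4(L)\le m_2(\ell)$ by a lemma quoted from the preprint \cite{loy}, or it is, in which case that section is $\df{1,1,\alpha}$ over $\z_p$ and cannot represent $\df{p,p}$ there, forcing $m_4(L)\le p$; bounded successive minima then give finiteness. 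Your stratification by the rank of $\sigma(\ell)+\sigma'(\ell'_p)$ replaces both the reduction theory and the external lemma: the rank-$4$ case is dispatched by Hadamard's inequality and the standard finiteness of classes of bounded discriminant, while the rank-$2$ and rank-$3$ cases are ruled out by the anisotropy of $\ell_p$, exploited through the parity of $p$-adic valuations of the values of $x^2+y^2$ and of $\ell\otimes\z_p$. Both proofs rest on the same local phenomenon at $p\equiv 3\Mod{4}$, but yours needs no citation and yields the explicit bound $dL\le Q(u_1)Q(u_2)p^2$.

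One justification needs repair. You infer that infinitely many primes satisfy (i)--(iv) because (iv) ``excludes a finite union of density-$\le\tfrac12$ sets''; that inference is invalid as stated, since finitely many sets of density $\tfrac12$ can union to a set of density $1$. Fortunately (iv) is essentially automatic: every integral overlattice $N\in\mathcal N$ has $dN=d/[N:\ell]^2$, so for any prime $p\nmid 2d$ with $p\nmid[N:\ell]$ a (necessarily primitive) representation of $p$ by $N$ would force $\legendre{-dN}{p}=\legendre{-d}{p}=1$, contradicting (iii). Hence (i) and (iii) imply (iv) for all but finitely many primes, and Chebotarev applied to $\q(i,\sqrt{-d})$ gives infinitely many admissible $p$. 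With that patch the proof is complete.
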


\begin{proof} Choose a prime $p$ such that 
$$
p \equiv 3 \Mod 4 \  \ \text{and} \ \ \left(\frac {d\ell}{p}\right)=1.
$$
Note that there are infinitely many primes satisfying these properties. Let $L$ be a quaternary $\z$-lattice representing both $\ell$ and $\ell'(p)=\langle p,p\rangle$. Let $\{ x_i\}$ be a Minkowski reduced basis for $L$ such that $Q(x_i)=m_i(L)$ for each $i=1,2,\dots,4$. Note that such a basis always exists (for this, see \cite{w}). If $\ell$ is not represented by the $3\times 3$ section $L'=\z x_1+\z x_2+\z x_3$ of $L$, then we have $m_4(L) \le m_2(\ell)$ by Lemma 2.1 of \cite{loy}.  Hence the number of possible quaternary $\z$-lattices $L$ is finite  up to isometry. Suppose on the contrary that $\ell$ is represented by $L'$. Then we have $L'_p \cong \df{1,1,\alpha}$ for some $\alpha \in \z_p$. Since $\langle p,p\rangle$ is not represented by $L'$ over $\z_p$, we have $m_4(L) \le m_2(\ell'(p))=p$. This completes the proof.             
\end{proof}

\begin{lem}\label{f-qua}
For any finite number of quaternary $\z$-lattices $L_1,\ldots,L_t$, there are infinitely many isometry classes of binary $\z$-lattices $\ell$ which are not represented by $L_i$ for any $i$ with $1\le i \le t$. 
\end{lem}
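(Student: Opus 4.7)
The plan is to adapt the construction in the proof of Lemma~\ref{f-bin}. For each $i$, I fix a Minkowski reduced basis $x_1^{(i)},\dots,x_4^{(i)}$ of $L_i$ with $Q(x_j^{(i)})=m_j(L_i)$, and let $L'_i=\z x_1^{(i)}+\z x_2^{(i)}+\z x_3^{(i)}$ denote its ternary section. By Dirichlet's theorem on primes in arithmetic progressions combined with the Chinese remainder theorem, there are infinitely many primes $p$ satisfying
\[
 p\equiv 3\pmod 4,\qquad p\nmid 2\textstyle\prod_i dL_i\cdot dL'_i,\qquad \legendre{dL'_i}{p}=1 \text{ for every }i.
\]
For each such $p$ I set $\ell_p=\df{p,p}$. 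Distinct primes $p$ yield non-isometric $\ell_p$ (by comparison of discriminants), so it suffices to show $\ell_p\nra L_i$ for every $i$, provided $p$ is sufficiently large.

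Suppose on the contrary that $\ell_p\ra L_i$ for some $i$. Applying Lemma~2.1 of \cite{loy} to this representation (rank $4$ target, rank $2$ source) yields two cases: either (A) $\ell_p\ra L'_i$, or (B) $m_4(L_i)\le m_2(\ell_p)=p$. In Case~(A), the argument embedded in the proof of Lemma~\ref{f-bin} applies verbatim: the Legendre hypothesis $\legendre{dL'_i}{p}=1$ together with $\ell_p\ra L'_i$ forces $(L'_i)_p\cong\df{1,1,\alpha}$ with $\alpha$ a square unit, and the congruence $p\equiv3\pmod 4$, which makes $\df{p,p}\otimes\q_p$ anisotropic with Hasse invariant $-1$, then forbids $\df{p,p}\ra(L'_i)_p$, a contradiction.

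Case~(B) is the main obstacle: for $p>\max_i m_4(L_i)$ the bound $m_4(L_i)\le p$ is automatic and yields no immediate contradiction. Indeed, unlike the ternary case, a unimodular quaternary $\z_p$-lattice can contain $\df{p,p}$ as a sublattice (for instance $\df{3,3}\ra I_4$ already globally over $\z$). To dispose of this case I would augment the list of congruences on $p$ with additional Legendre conditions such as $\legendre{dL_i}{p}=1$ for each $i$, together with a further congruence class condition chosen so that the $\q_p$-invariants (disc class and Hasse symbol) of $(L_i)_p$ become incompatible with a splitting $\ell_p\perp\ell_p^\perp$; equivalently, one may replace $\df{p,p}$ by a better-tailored variant $\df{p,p\,\Delta_i}$ where $\Delta_i$ is a non-square unit $\bmod p$ chosen to yield a uniform local obstruction across all $L_i$. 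The hard part is engineering the congruence conditions to obstruct every $L_i$ simultaneously, which is the quaternary-level analogue of the clean local step in Lemma~\ref{f-bin}.
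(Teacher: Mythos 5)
There is a genuine gap, and you have in fact located it yourself: Case~(B) is not a technicality to be engineered away but the entire content of the lemma, and the fix you sketch cannot work. The decisive obstruction is the sublist of $L_i$ with \emph{square} discriminant. For such an $L_i$ and any prime $p\nmid 2\,dL_i$, the localization $(L_i)_p$ is a unimodular quaternary $\z_p$-lattice of square discriminant, hence isometric to an orthogonal sum of two hyperbolic planes; this lattice represents \emph{every} binary $\z_p$-lattice (pick one vector of the prescribed norm in each hyperbolic plane). So no amount of extra Legendre conditions on $p$, and no replacement of $\df{p,p}$ by $\df{p,p\Delta_i}$, can produce an obstruction at $p$ against a square-discriminant $L_i$ such as $I_4$ --- consistent with your own observation that $\df{3,3}\ra I_4$. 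The obstruction for these lattices must live at a prime dividing $2\,dL_i$, and your framework (Minkowski sections and Lemma~2.1 of \cite{loy}, which serves in Lemma~\ref{f-bin} to bound the set of lattices representing a \emph{given} pair, not to exclude a representation into a \emph{fixed} lattice) has no mechanism to produce it.

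The paper's proof is purely local and supplies exactly the missing idea. Split the list according to whether $dL_i$ is a square. If $dL_i$ is a square, then either $S_2(\q_2L_i)=1$, in which case $\q_2L_i$ is the anisotropic quaternary $\q_2$-space, or $S_2(\q_2L_i)=-1$, in which case Hilbert reciprocity produces an odd prime $q_i$ with $S_{q_i}(\q_{q_i}L_i)=-1$, so that $\q_{q_i}L_i$ is anisotropic; in either case $(L_i)_v$ cannot contain a binary lattice whose $\q_v$-span is isotropic. If $dL_i=u_i^2v_i$ with $v_i>1$ squarefree, one chooses $p$ with $\legendre{v_i}{p}=-1$ (note the sign: you want $(L_i)_p\cong\df{1,1,1,\Delta_p}$, the non-split unimodular lattice, which fails to represent the $p$-modular binary lattice $\df{p,-p\Delta_p}$ by a reduction mod $p$ --- a two-dimensional totally isotropic subspace would force the residual $\f_p$-space to have square discriminant). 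The single lattice $\ell=\df{p,p\alpha}$ with $\alpha\equiv 7\Mod 8$, $\legendre{-\alpha}{p}=-1$ and $\legendre{-\alpha}{q_i}=1$ is then isotropic over $\q_2$ and over each $\q_{q_i}$ while being locally non-represented at $p$ by the remaining $L_i$, which kills all cases simultaneously. Without this reciprocity step your construction cannot be completed.
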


\begin{proof}
Without loss of generality, we may assume that there is an integer $j$ with $0\le j\le t$ such that  
$$
dL_i=\begin{cases}
u_i^2 & \text{ for }1\le i \le j,\\
u_i^2\cdot v_i & \text { for } j+1 \le i \le t,
\end{cases}
$$
where $u_i$'s are integers for any $i$ with $1\le i\le t$, and $v_i$'s are square-free integers greater than $1$ for any $i$ with  $j+1\le i\le t$. Choose a prime $p$ such that 
\begin{newenum}
\item $\legendre{v_i}{p}=-1$ for any $i$ with  $j+1\le i \le t$,
\item $(p,2u_1\cdots u_t)=1$,
\item $p\equiv 1 \Mod{4}$.
\end{newenum}
For each $1\le i\le j$, note that if the Hasse symbol $S_2(\q_2L_i)$ is $1$, then $\q_2L_i$ is the anisotropic space. Hence $(L_i)_2$ does not represent any binary isotropic $\z_2$-lattice in this case. On the other hand, if $S_2(\q_2L_i)=-1$, then by the Hilbert Reciprocity Law, there exists an odd prime $q=q_i$ such that $S_q(\q_qL_i)=-1$. Hence $(L_i)_q$ is anisotropic, and therefore $(L_i)_q$ does not represent any binary isotropic $\z_q$-lattice.

For any $i$ with  $j+1\le i \le t$, we have $(L_i)_p \cong \df{1,1,1,\Delta_p}$.
Note that the binary $\z_p$-lattice $\df{p,-p\Delta_p}$ is not represented by $(L_i)_p$.

Now, choose a positive integer $\alpha$ satisfying 
$$
\alpha\equiv 7\Mod{8},\ \legendre{-\alpha}{p}=-1, \text{ and } \legendre{-\alpha}{q_i}=1,
$$
for any $i$ with  $1\le i \le j$ such that $S_2(\q_2L_i)=1$.
Note that there are infinitely many such integers $\alpha$. 
We define a binary $\z$-lattice $\ell=\ell(\alpha)=\df{p,p\alpha}$. Then from the construction of $p$ and $\alpha$, the binary $\z$-lattice $\ell$ is not represented by any of $L_i$'s for any $i$ with $1\le i\le t$, because
$$
\begin{cases}
\begin{array}{llll}
\ell_{2} \!\!\!\!&\cong\df{p,-p}_2\!\!\!\!\! & \nra (L_i)_{2} & \text{if } 1\le i \le j \text{ and }S_2(\q_2L_i)=1,\\
\ell_{q_i}\!\!\!\!&\cong\df{p,-p}_{q_i}\!\!\!\!\!& \nra  (L_i)_{q_i} & \text{if } 1\le i \le j \text{ and } S_2(\q_2L_i)=-1,\\
\ell_p\!\!\!\!&\cong\df{p,-p\Delta_p}_p\!\!\!\!\!& \nra  (L_i)_p & \text{if } j+1\le i \le t.
\end{array}
\end{cases}
$$
This completes the proof.  \end{proof}

\begin{thm}\label{thm3242} For any binary $\z$-lattice $\ell$, there are infinitely many pairs $(\ell_1,\ell_2)$ of isometry classes of binary $\z$-lattices such that there does not exist a quaternary $\z$-lattice representing $\ell$ and $\ell_1, \ell_2$ simultaneously. In particular, we have $\kappa(2,4)=2$.
\end{thm}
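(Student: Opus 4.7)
The plan is to chain Lemma \ref{f-bin} and Lemma \ref{f-qua} together. Given the binary $\z$-lattice $\ell$, I would first invoke Lemma \ref{f-bin} to produce infinitely many isometry classes of binary $\z$-lattices $\ell_1$ with the property that only finitely many quaternary $\z$-lattices represent both $\ell$ and $\ell_1$. Fix one such $\ell_1$ and let $L_1,\dots,L_t$ be the (finite) complete list of quaternary $\z$-lattices, up to isometry, that represent both $\ell$ and $\ell_1$.

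Next, I would apply Lemma \ref{f-qua} to the finite family $\{L_1,\dots,L_t\}$ to obtain infinitely many isometry classes of binary $\z$-lattices $\ell_2$ that are not represented by any of the $L_i$. For any such $\ell_2$, a quaternary $\z$-lattice $L$ that represents all three of $\ell,\ell_1,\ell_2$ would in particular represent $\ell$ and $\ell_1$, hence be isometric to some $L_i$; but no $L_i$ represents $\ell_2$, a contradiction. Thus each such $\ell_2$ yields a pair $(\ell_1,\ell_2)$ for which no common quaternary representation exists, giving infinitely many pairs (in fact, one may further vary $\ell_1$ among the infinite family from Lemma \ref{f-bin}).

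For the ``in particular'' part, the lower bound $\kappa(2,4)\ge 2$ is immediate: given any two binary $\z$-lattices $\ell_1$ and $\ell_2$, the quaternary $\z$-lattice $\ell_1\perp \ell_2$ represents both of them. On the other hand, applying the construction above to an arbitrary binary $\z$-lattice $\ell$ produces a triple $\ell,\ell_1,\ell_2$ of binary $\z$-lattices admitting no common quaternary representation, which forces $\kappa(2,4)\le 2$. Combining the two bounds gives $\kappa(2,4)=2$.

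Since all the substantive work has already been carried out in Lemmas \ref{f-bin} and \ref{f-qua}, the only real issue at the level of this theorem is bookkeeping: one must apply the two lemmas in the correct order (first fix $\ell_1$ to cut the relevant quaternary lattices down to a finite list, then use that list as input to the second lemma). No part of this step is expected to present a genuine obstacle.
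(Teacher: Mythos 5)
Your proposal is correct and follows exactly the paper's argument: the lower bound $\kappa(2,4)\ge 2$ via $\ell_1\perp\ell_2$, and the upper bound by chaining Lemma \ref{f-bin} (to cut the common representing quaternary lattices down to a finite list) with Lemma \ref{f-qua} (to find $\ell_2$ avoiding that list). The paper states this chaining only implicitly (``the theorem follows directly from Lemmas \ref{f-bin} and \ref{f-qua}''), and your write-up simply makes the same bookkeeping explicit.
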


\begin{proof}  For any two binary $\z$-lattices $\ell_1$ and $\ell_2$, they are represented by the quaternary $\z$-lattice $\ell_1\perp\ell_2$. Hence we have $\uu[2,4]\ge2$.  Now, the theorem follows directly from Lemmas \ref{f-bin} and \ref{f-qua}.
\end{proof}

\begin{rmk} {\rm As a concrete example of the above theorem, one may easily show that there does not exist a quaternary $\z$-lattice representing $\langle 1,1\rangle, \langle 3,3\rangle$, and $\langle 7,161\rangle$ simultaneously. }  
\end{rmk}

Let $\ell_1$ and $\ell_2$ be $\z$-lattices of rank $m$. We say the pair $(\ell_1,\ell_2)$ of $\z$-lattices is {\it buried in rank $n$} if there is a $\z$-lattice $L$ of rank $n$ representing both $\ell_1$ and $\ell_2$. Similarly, we define the pair $(\ell_1,\ell_2)$ is buried in rank $n$ over $\z_p$ ($\q_p$) if there is a $\z_p$-lattice $L_p$ ($\q_p$-space $V_p$) representing both $\ell_1$ and $\ell_2$ over $\z_p$ ($\q_p$, respectively).
  From the definition, if the pair  $(\ell_1,\ell_2)$ is buried in rank $n$, then it is buried in rank $r$ for any integer $r \ge n$. Clearly, any pair of $\z$-lattices of rank $m$ is buried in rank $2m$. For any binary $\z$-lattice $\ell$, there are infinitely many isometry classes of binary $\z$-lattices $\ell'$ such that $(\ell,\ell')$ is not buried in rank $3$ by Lemma \ref{f-bin}. 

Note that if the pair $(\ell_1,\ell_2)$ of $\z$-lattices is buried in rank $n$ then it should be buried in rank $n$ over $\z_p$ for any prime $p$. 

\begin{prop} \label{buried} Let $\ell_1$ and $\ell_2$ be even $\z$-lattices of rank $m$. Then for any prime $p$, the followings are equivalent.
\begin{newenum}
\item  The pair $(\ell_1,\ell_2)$ is buried in rank $n$ over $\z_p$.
\item   The pair $(\ell_1,\ell_2)$ is buried in rank $n$ over $\q_p$.
\item  As quadratic $\q_p$-spaces, $\q_p\ell_1\simeq \q_p\ell_2$ or $d(\q_p\ell_1)\ne d(\q_p\ell_2)$ and $n=m+1$, or $n\ge m+2$. 
\end{newenum}
\end{prop}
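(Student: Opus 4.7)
The plan is to establish the cycle (i) $\Rightarrow$ (ii) $\Rightarrow$ (iii) $\Rightarrow$ (i). The first implication is immediate upon tensoring with $\q_p$. For (ii) $\Rightarrow$ (i), given a $\q_p$-space $V$ of dimension $n$ together with isometric embeddings $\sigma_i \colon \q_p\ell_i \hookrightarrow V$, I would take the $\z_p$-span $M$ of $\sigma_1(\ell_1) \cup \sigma_2(\ell_2)$ in $V$, which is integral of full rank, and enlarge it to a maximal integral $\z_p$-lattice $L_p$ on $V$; this $L_p$ will have rank $n$ and represent both $\ell_1$ and $\ell_2$. The evenness hypothesis on the $\ell_i$ is invoked here to keep the enlargement well-behaved at the dyadic prime.

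For (ii) $\Rightarrow$ (iii) the argument splits by dimension. If $n = m$, the two full-rank subspaces $\sigma_i(\q_p\ell_i)$ both equal $V$, forcing $\q_p\ell_1 \simeq \q_p\ell_2$. If $n = m+1$, Witt's extension theorem gives $V \simeq \q_p\ell_i \perp \langle \alpha_i \rangle$ for $i = 1, 2$, so comparing discriminants yields $d(\q_p\ell_1)\alpha_1 \equiv d(\q_p\ell_2)\alpha_2$ modulo $(\q_p^\times)^2$; if in addition $d(\q_p\ell_1) = d(\q_p\ell_2)$ this forces $\alpha_1 \equiv \alpha_2$, whence Witt cancellation gives $\q_p\ell_1 \simeq \q_p\ell_2$. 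For $n \geq m+2$ there is nothing to verify.

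The substantial direction is (iii) $\Rightarrow$ (ii), which I would handle case by case. When $\q_p\ell_1 \simeq \q_p\ell_2$, take $V = \q_p\ell_1 \perp W$ for any non-degenerate $W$ of dimension $n - m$. When $d(\q_p\ell_1) \neq d(\q_p\ell_2)$ and $n = m+1$, the goal is to exhibit $\alpha_1, \alpha_2 \in \q_p^\times$ with $\q_p\ell_1 \perp \langle\alpha_1\rangle \simeq \q_p\ell_2 \perp \langle\alpha_2\rangle$: the discriminant constraint pins down $\alpha_1/\alpha_2$ modulo squares, and the non-triviality of $d(\q_p\ell_1)/d(\q_p\ell_2)$ in $\q_p^\times/(\q_p^\times)^2$ makes the Hilbert symbol $(\cdot,\, d(\q_p\ell_1)d(\q_p\ell_2))$ non-trivial, so $\alpha_1$ can be chosen to match the required Hasse invariant. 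For $n \geq m+2$, writing $V = \q_p\ell_1 \perp \langle\alpha_1, \ldots, \alpha_{n-m}\rangle$ gives $n - m \geq 2$ free scalar parameters, enough to realize any prescribed discriminant and Hasse invariant for $V$; one then verifies $\q_p\ell_2 \hookrightarrow V$ by a standard two-dimensional complementary form existence check.

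The hardest part will be the Hilbert-symbol manipulation in the $n = m+1$ case of (iii) $\Rightarrow$ (ii), together with the dyadic integrality bookkeeping in (ii) $\Rightarrow$ (i), where the evenness hypothesis is what ensures that the maximal integral enlargement on $V$ remains the intended rank-$n$ $\z_p$-lattice.
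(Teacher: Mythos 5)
The implications (i) $\Rightarrow$ (ii) and (ii) $\Leftrightarrow$ (iii) in your proposal are fine in outline and consistent with the paper, which leaves them to the reader; in particular your dimension-split argument for (ii) $\Leftrightarrow$ (iii), with the Hilbert-symbol computation using the nontrivial character $\alpha\mapsto(\alpha, d(\q_p\ell_1)d(\q_p\ell_2))_p$ when $n=m+1$, is the standard one and is correct.

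The genuine gap is in (ii) $\Rightarrow$ (i). The $\z_p$-module $M$ generated by $\sigma_1(\ell_1)\cup\sigma_2(\ell_2)$ is in general \emph{not} integral: the cross terms $B(\sigma_1(x),\sigma_2(y))$ are arbitrary elements of $\q_p$, and nothing forces them into $\z_p$. For instance, let $V$ be the hyperbolic plane over $\q_p$ with $Q(e)=Q(f)=0$ and $B(e,f)=1$, and take $\ell_1=\ell_2=\df{2}$ with $\sigma_1(x)=e+f$ and $\sigma_2(y)=pe+p^{-1}f$; then $B(\sigma_1(x),\sigma_2(y))=p+p^{-1}\notin\z_p$, so $M$ is not integral and admits no integral enlargement on $V$. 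Choosing the embeddings so that the joint span \emph{is} integral is essentially equivalent to statement (i) itself, so the construction is circular as it stands. (There is also the minor point that $M$ has rank at most $2m$, hence is not of full rank when $n>2m$, though that is easily repaired.) The paper's proof sidesteps all of this: it takes any $2\z_p$-maximal lattice $L_p$ on $V_p$ and invokes the local representation theorem for maximal lattices --- over $\q_p$, an $\mathfrak{a}$-maximal lattice on $V$ represents every $\z_p$-lattice $\ell$ with $\mathfrak{n}(\ell)\subseteq\mathfrak{a}$ whose ambient space is represented by $V$ (see O'Meara or Kitaoka). That theorem, applied with $\mathfrak{a}=2\z_p$, is exactly where the evenness of $\ell_1,\ell_2$ enters (it guarantees $\mathfrak{n}(\ell_i)\subseteq 2\z_p$), not in any bookkeeping about enlarging a span. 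You need this representation theorem, or an equivalent substitute, to close the argument; the naive span-and-enlarge construction does not work.
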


\begin{proof} One may easily show that (i) implies (ii), and (ii) is equivalent to (iii). To show that (ii) implies (i), let $V_p$ be a quadratic space that represents $\q_p\ell_1$ and $\q_p\ell_2$ over $\q_p$. Choose any $2\z_p$-maximal $\z_p$-lattice $L_p$ on $V_p$. From the definition of the maximal lattice, $L_p$ represents both $\ell_1$ and $\ell_2$ over $\z_p$.       
\end{proof}

\begin{rmk} {\rm If $\ell_1$ or $\ell_2$ is not an even $\z$-lattice, then the above proposition does not hold for $p=2$. For example, let $\ell_1=\langle 1,28\rangle$ and $\ell_2= \begin{pmatrix}2&1\\1&2\end{pmatrix}$. Then the pair $(\ell_1,\ell_2)$ is buried in rank $3$ over $\q_2$. In fact, the quadratic $\q_2$-space $\ell_2\perp \langle2\rangle$ of rank $3$ represents $\langle 1,28\rangle$ over $\q_2$. However, there is no $\z_2$-lattice of rank $3$ representing both $\ell_1$ and $\ell_2$.}
\end{rmk}

Let $\ell_1$ and $\ell_2$ be $\z$-lattices of rank $m$. We say the pair $(\ell_1,\ell_2)$ is {\it buried in a genus of rank $n$} if there is a $\z$-lattice $L$ of rank $n$ such that $\ell_1 \,\ra\, L$ and $\ell_2 \,\ra\, L'$ for some $L' \in \gen(L)$.    Note that if the pair $(\ell_1,\ell_2)$ of $\z$-lattices is buried in rank $n$, then it is buried in a genus of rank $n$. The following example shows that the converse does not hold in general.  

\begin{exam}
 Let $\ell_1=\df{1,23}$ and $\ell_2=\df{2,3}$. One may easily show that there is no ternary $\z$-lattice representing both $\ell_1$ and $\ell_2$. However, we see that
$$
\ell_1 \ra L_1=\df{1}\perp \begin{pmatrix} 5&1\\1&23 \end{pmatrix} \quad \text{and} \quad  \ell_2 \ra L_2=\df{2,3,19},
$$
and $L_2\in \gen(L_1)$. Hence the pair $(\ell_1,\ell_2)$ is buried in $\gen(L_1)$ of rank $3$. 
\end{exam}

\begin{prop} \label{local-g} Let $\ell_1, \ell_2$ be even $\z$-lattices of rank $m$. Then the pair $(\ell_1,\ell_2)$ is buried in rank $n$ over $\q_p$ for any prime $p$ if and only if it is buried in a genus of rank $n$.   
\end{prop}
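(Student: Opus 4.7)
The plan is to prove the two directions separately. The forward implication ($\Rightarrow$) is immediate: if $\ell_1 \ra L$ and $\ell_2 \ra L'$ with $L' \in \gen(L)$, then $L_p \cong L'_p$ at every prime $p$, so $L_p$ itself is a rank-$n$ $\z_p$-lattice containing isometric copies of both $\ell_i$, giving burial in rank $n$ over $\z_p$, and hence over $\q_p$ after passing to the $\q_p$-hull.

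For the converse ($\Leftarrow$), I would proceed in three steps. First, apply Proposition \ref{buried} to replace the hypothesis by its $\z_p$-form: since the $\ell_i$ are even, being buried over $\q_p$ in rank $n$ is equivalent to being buried over $\z_p$ in rank $n$. Thus at each prime $p$ I obtain a rank-$n$ $\z_p$-lattice $M_p$ containing isometric copies of both $\ell_1$ and $\ell_2$.

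Second, I would construct a global (positive definite) quadratic $\q$-space $V$ of dimension $n$ containing $\q\ell_1$ such that $V_p$ represents $\q_p\ell_2$ for every finite prime $p$. Writing $V = \q\ell_1 \perp W$ with $\dim W = n-m$, the task reduces to prescribing compatible local invariants for $W$. The local flexibility is governed by Proposition \ref{buried}(iii): if $n \ge m+2$ there is ample room, while if $n = m+1$ the local discriminant of $W_p$ is essentially forced, with the two sub-cases being $\q_p\ell_1 \simeq \q_p\ell_2$ and $d(\q_p\ell_1) \ne d(\q_p\ell_2)$. I would then glue these local $W_p$'s to a global $W$ via Hasse--Minkowski, using the remaining freedom in local Hasse invariants (or in local discriminants, when $n \ge m+2$) to satisfy Hilbert reciprocity compatibly with the fixed real signature.

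Third, I would build a rank-$n$ $\z$-lattice $L$ on $V$ with $\ell_1 \ra L$ globally and $\ell_2 \ra L_p$ at every prime. Starting from $L_0 = \ell_1 \perp N$ for some convenient $\z$-lattice $N$ on $W$, I would use Witt's theorem to realize each $M_p$ as a $\z_p$-lattice on $V_p$, and then modify $L_0$ at the finitely many primes where $(L_0)_p$ fails to represent $\ell_2$, replacing it locally by a suitable translate of $M_p$; the resulting $L$ is a well-defined global lattice on $V$ since a $\z$-lattice on a fixed $\q$-space is prescribed by its localizations, which may differ from a reference lattice at only finitely many primes. Then $\ell_2 \ra L_p$ for all $p$, and the classical local-to-global principle for representations by a genus yields $\ell_2 \ra L'$ for some $L' \in \gen(L)$, so $(\ell_1,\ell_2)$ is buried in $\gen(L)$. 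The main obstacle will be the second step: reconciling Hilbert reciprocity with the local burial constraints, especially in the tight case $n = m+1$, where the one-dimensional orthogonal complements $W_p$ leave little room to adjust invariants and the argument must split according to which sub-case of Proposition \ref{buried}(iii) holds at each prime.
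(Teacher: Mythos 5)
Your proposal is correct and follows the same overall strategy as the paper: the heart of the converse is to build a global quadratic $\q$-space $V$ of dimension $n$ representing both $\q\ell_1$ and $\q\ell_2$, the only delicate case being $n=m+1$, where an auxiliary prime is introduced so that Hilbert reciprocity closes the local conditions --- exactly the role your ``remaining freedom in local invariants'' plays, and the role the prime $q$ and the set $\mathcal P_0\subset \mathcal P(2d\ell_1d\ell_2)$ play in the paper (the paper writes $V=\q\ell_2\perp\df{\alpha}$ rather than $\q\ell_1\perp W$, which is the same computation with the roles of $\ell_1,\ell_2$ swapped). Where you genuinely diverge is the final descent from $V$ to a lattice. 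The paper simply takes a $2\z$-maximal lattice $L$ on $V$: since the $\ell_i$ are even, any local image of $\ell_i$ in $V_p$ lies in some $2\z_p$-maximal lattice, and all such lattices on $V_p$ are isometric, so $L_p$ represents both $\ell_i$ at every $p$ with no patching --- this is precisely where the evenness hypothesis is cashed in, and it makes your entire Step 3 unnecessary. Your local-surgery construction does work, but with one caveat: after replacing $(L_0)_p$ by a copy of $M_p$ at finitely many primes you cannot assert $\ell_1\ra L$ globally unless each copy of $M_p$ is positioned (by Witt extension) to contain the localization of the fixed global copy of $\ell_1$; it is cleaner to settle for $\ell_1$ being represented by $\gen(L)$ and then relabel the base point of the genus, which the definition of ``buried in a genus'' permits.
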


\begin{proof} Note that the ``if" part is trivial. To prove the ``only if" part, we first prove that there is a quadratic $\q$-space $V$ representing both $\q \ell_1$ and $\q \ell_2$. If $n=m$ or $n \ge m+2$, then the existence of such a quadratic $\q$-space follows directly from Proposition \ref{buried}. Assume that $n=m+1$. Let $\alpha$ be a positive integer. Note that by the Local-Global principle, we have
$$
\q\ell_1 \ra  \q \ell_2 \perp \langle \alpha\rangle \ \iff \  (d\ell_1d\ell_2,\alpha)_p=S_p\ell_1\cdot S_p\ell_2\cdot (d\ell_1d\ell_2,d\ell_2)_p \ \ \text{for any prime $p$},
$$ 
where $S_p(\cdot)$ is the Hasse symbol over $\q_p$. One may easily show that there is a subset $\mathcal P_0 \subset \mathcal P(2d\ell_1d\ell_2)$ and a prime $q \not \in \mathcal P(2d\ell_1d\ell_2)$ such that   
$$
\left(d\ell_1d\ell_2,\prod_{r \in \mathcal P_0}r\cdot q\right)_p=S_p\ell_1\cdot S_p\ell_2\cdot (d\ell_1d\ell_2,d\ell_2)_p \ \ \text{for any prime $p \in  \mathcal P(2d\ell_1d\ell_2)$}.  
$$    
Note that if $d\ell_1d\ell_2\sim1$ over $\q_p$, then the above equality holds, for $\q_p\ell_1 \simeq \q_p \ell_2$ from the assumption. For any prime $p \not \in \mathcal P(2d\ell_1d\ell_2) \cup \{q\}$, one may easily check that 
\begin{equation} \label{11}
\q_p\ell_1 \ra  \q_p \ell_2 \perp \langle \prod_{r \in \mathcal P_0}r\cdot q\rangle.
\end{equation}
Since \eqref{11} holds for any prime $p \ne q$, $\q\ell_1$ is represented by $V=\q\ell_2\perp \langle \prod_{r \in \mathcal P_0}r\cdot q\rangle$ by the Hilbert Reciprocity Law and the Local-Global principle.  Now, one may easily show that the pair $(\ell_1,\ell_2)$ is buried in the genus of a $2\z$-maximal lattice on $V$ of rank $n+1$.    
\end{proof}

\begin{cor}  \label{local-g1} Let $\ell_1, \ell_2$ be $\z$-lattices of rank $m$ which are not necessarily even. Then the pair $(\ell_1,\ell_2)$ is buried in rank $n$ over $\z_p$ for any prime $p$ if and only if it is buried in a genus of rank $n$.  
\end{cor}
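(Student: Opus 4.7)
The \emph{if} direction is immediate: if $\ell_i\ra L_i$ with $L_i\in\gen(L)$ for $i=1,2$, then $L_p\cong (L_i)_p$ at every prime $p$, so $\ell_i$ is represented by $L_p$ over $\z_p$, yielding burial of $(\ell_1,\ell_2)$ in rank $n$ over $\z_p$.

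For the \emph{only if} direction, the plan is to adapt the proof of Proposition \ref{local-g}, whose only use of the evenness hypothesis occurs at the final step where a $2\z$-maximal lattice on $V$ is chosen. Since burial over $\z_p$ trivially implies burial over $\q_p$, the first part of that proof---the Hilbert-reciprocity construction of a global quadratic $\q$-space $V$ of rank $n$ representing both $\q\ell_1$ and $\q\ell_2$---carries over without change. In addition, when choosing $V$ I would impose the local condition $V_p\simeq\q_p L_p^\circ$ at each prime $p\in\mathcal{P}(2\,d\ell_1\,d\ell_2)$, where $L_p^\circ$ denotes the $\z_p$-lattice of rank $n$ representing $\ell_1,\ell_2$ over $\z_p$ provided by the hypothesis. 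Only finitely many extra Hilbert-symbol conditions are required, and their joint satisfiability should follow from Hilbert reciprocity combined with the coherence of the hypothesized local data.

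Next, I would define a global integral $\z$-lattice $L$ on $V$ by prescribing its localizations: at each $p\in\mathcal{P}(2\,d\ell_1\,d\ell_2)$ set $L_p=L_p^\circ$ (embedded in $V_p$ via the chosen isometry), and at every other prime $p$ take $L_p$ to be a $\z_p$-maximal integral lattice on $V_p$. At such remaining odd, unramified $p$, both $\ell_1$ and $\ell_2$ are $p$-unimodular with $\q_p$-spans in $V_p$, so $L_p$ represents them automatically. These local lattices agree with a fixed choice at all but finitely many primes, hence patch into a global integral $\z$-lattice $L$ on $V$ of rank $n$. By construction, $L$ represents each $\ell_i$ over $\z_p$ at every prime, so the classical genus-representation theorem furnishes $L_1,L_2\in\gen(L)$ with $\ell_i\ra L_i$, exhibiting $(\ell_1,\ell_2)$ as buried in $\gen(L)$.

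The main obstacle will be verifying that the additional Hilbert-symbol conditions defining $V$ can be imposed simultaneously at the primes of $\mathcal{P}(2\,d\ell_1\,d\ell_2)$. This is precisely the step where the stronger $\z_p$-hypothesis (rather than the $\q_p$-hypothesis of Proposition \ref{local-g}) is essential: absent it, the prescribed local datum at $p=2$ need not fit any global $V$, exactly as illustrated by the remark preceding the corollary.
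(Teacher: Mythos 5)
Your proposal is correct and follows essentially the same route as the paper: augment the rational space $V$ constructed in Proposition \ref{local-g} with an extra local compatibility condition, then patch the hypothesized local lattice together with maximal lattices at the remaining primes into a global $\z$-lattice $L$ whose genus buries the pair. The only (immaterial) difference is that the paper imposes the extra condition and performs the gluing only at $p=2$, keeping the $2\z$-maximal lattice $K_p$ at all odd primes, since the evenness issue is invisible away from $2$.
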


\begin{proof} Since the proof is quite similar to the above, we only provide the proof of the ``only if" part in the case when $n=m+1$. Assume that $K(2)$ is a $\z_2$-lattice representing both $\ell_1$ and $\ell_2$ over $\z_2$. Then one may suitably choose an integer $\alpha$  in Proposition \ref{local-g}  so that the quadratic $\q$-space $V$ defined there represents $K(2)$ over $\q_2$. Let $K$ be any $2\z$-maximal lattice on $V$. Then both $\ell_1$ and $\ell_2$ are represented by $K$ over $\z_p$ for any prime $p\ne 2$. Now, define a $\z$-lattice $L$ such that
$$
L_p=\begin{cases} K(2) \quad &\text{if $p=2$},\\ 
                  K_p \quad &\text{otherwise}. \end{cases}
$$                                   
Then clearly, the pair $(\ell_1,\ell_2)$ is buried in the $\gen(L)$ of rank $n+1$.   
\end{proof}

We say a $\z$-lattice $L$ is {\it primitive} if there is no integral $\z$-lattice properly containing $L$  on the quadratic $\q$-space $\q L$.  

\begin{thm}\label{32necsuf}
	Let $\ell_1$ and $\ell_2$ be primitive binary $\z$-lattices such that the pair $(\ell_1,\ell_2)$ is not buried in rank $2$. Then the followings are equivalent.
	\begin{newenum}
		\item \label{32cond:1} The pair $(\ell_1,\ell_2)$ is buried in rank $3$.
		\item \label{32cond:2} There exists a positive integer $a$ primitively represented by both $\ell_1$ and $\ell_2$ as well as the following open interval contains an integer:
		$$
		I_{\ell_1,\ell_2,a}:=\left(\frac{b_1b_2-\sqrt{d\ell_1d\ell_2}}{a},\frac{b_1b_2+\sqrt{d\ell_1d\ell_2}}{a}\right),$$
		where $\ell_1=  \setlength\arraycolsep{2pt} \begin{pmatrix} a&b_1\\ b_1&c_1	\end{pmatrix}$ and $\ell_2=\setlength\arraycolsep{2pt}\begin{pmatrix} a&b_2\\ b_2&c_2 \end{pmatrix}$.
	\end{newenum}
\end{thm}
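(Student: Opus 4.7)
The plan is to translate the burial condition into an explicit gluing construction and then to interpret positive-definiteness of the resulting Gram matrix as an interval condition on its one free entry. For the implication (ii) $\Rightarrow$ (i), suppose $a$ is primitively represented by both $\ell_i$. I choose a basis of each $\ell_i$ whose first vector has norm $a$, giving the Gram forms displayed in the statement, and then for any integer $n \in I_{\ell_1,\ell_2,a}$ I define $L$ to be the $\z$-lattice with Gram matrix
\[
G(n) := \begin{pmatrix} a & b_1 & b_2 \\ b_1 & c_1 & n \\ b_2 & n & c_2 \end{pmatrix}.
\]
A direct expansion gives $\det G(n) = -\bigl(a n^{2} - 2 b_1 b_2 \, n + b_1^{2} c_2 + b_2^{2} c_1 - a c_1 c_2\bigr)$, whose discriminant as a quadratic in $n$ simplifies to $4 \, d\ell_1 \, d\ell_2$. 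Hence $\det G(n) > 0$ is precisely equivalent to $n \in I_{\ell_1,\ell_2,a}$, and together with positive-definiteness of the upper-left $2 \times 2$ block this makes $L$ an integral positive-definite ternary lattice representing both $\ell_1$ and $\ell_2$.

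For the converse (i) $\Rightarrow$ (ii), let $L$ be a ternary lattice representing both $\ell_1$ and $\ell_2$ and identify each $\ell_i$ with its image in $L$. Since $\rank \ell_i = 2$ inside $\q L$ of dimension $3$, the subspaces $\q \ell_1$ and $\q \ell_2$ must intersect in a subspace of dimension at least $1$. If that dimension were $2$, then $\q \ell_1 = \q \ell_2$ and $L \cap \q \ell_1$ would be a rank-$2$ integral sublattice of $L$ containing both $\ell_1$ and $\ell_2$, contradicting the hypothesis that $(\ell_1,\ell_2)$ is not buried in rank $2$. So $\q \ell_1 \cap \q \ell_2$ is exactly a line; let $w$ be a primitive vector of $L$ on it. The primitivity of each $\ell_i$ forces $L \cap \q \ell_i = \ell_i$, so $w \in \ell_1 \cap \ell_2$, and $w$ is primitive in each $\ell_i$ because it is primitive in $L$.

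Set $a := Q(w)$ and complete $w$ to bases $\{w, u_1\}$ of $\ell_1$ and $\{w, u_2\}$ of $\ell_2$, producing exactly the Gram matrices in the statement. The vectors $w, u_1, u_2$ are linearly independent: otherwise $u_2 \in \q \ell_1 \cap L = \ell_1$, forcing $u_2 \in \ell_1 \cap \ell_2 = \z w$ and collapsing $\ell_2$ to rank $1$. Therefore $\z w + \z u_1 + \z u_2$ is a positive-definite rank-$3$ sublattice of $L$ whose Gram matrix is exactly $G(n)$ for $n := B(u_1, u_2) \in \z$. Running the determinant identity of the first paragraph in reverse then gives $n \in I_{\ell_1,\ell_2,a}$, as required.

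The main obstacle, and the only place where the two hypotheses are used, is the extraction of the common primitive vector $w \in \ell_1 \cap \ell_2$. The ``not buried in rank $2$'' hypothesis forces $\q \ell_1 \cap \q \ell_2$ to be a line rather than a plane, while the primitivity of $\ell_1$ and $\ell_2$ pulls $w$ from the common $\q$-span back into each lattice. Once $w$ is in hand, everything reduces to the same $3 \times 3$ determinant identity used in the forward construction.
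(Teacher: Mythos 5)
Your proof is correct and follows essentially the same route as the paper's: both directions hinge on extracting a common vector $w$ of norm $a$ that is primitive in both $\ell_1$ and $\ell_2$, forming the rank-$3$ sublattice $\z w+\z u_1+\z u_2$, and translating positivity of its determinant into the interval condition via the identity $\det G(n)=-a n^2+2b_1b_2n-(b_1^2c_2+b_2^2c_1-ac_1c_2)$ with discriminant $4\,d\ell_1\,d\ell_2$. The only cosmetic difference is that you locate $w$ as a primitive vector of $L$ on the line $\q\ell_1\cap\q\ell_2$ and use $L\cap\q\ell_i=\ell_i$, whereas the paper manipulates an explicit integral dependence relation $au_1+bv_1=cu_2+dv_2$ with $(a,b,c,d)=1$; your version is, if anything, more explicit about where the ``not buried in rank $2$'' hypothesis is used.
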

\begin{proof}
First, we prove that \ref{32cond:1} implies \ref{32cond:2}. Let $L$ be a ternary $\z$-lattice containing both $\ell_1$ and $\ell_2$. Let us write 
$$
\ell_1=\z u_1+\z v_1 \text{ and } \ell_2=\z u_2+\z v_2.
$$
Since the vectors $u_1,u_2,v_1$, and $v_2$ are linearly dependent in $L$, there are integers $a,b,c$, and $d$ such that 
$$
x:=au_1+bv_1=cu_2+dv_2  \text { and }  (a,b,c,d)=1.
$$
We claim that $(a,b)=(c,d)=1$. Suppose on the contrary that, without loss of generality, there exists a prime $p$ such that $p\mid (a,b)$ but $p \nmid g:=(c,d)$. Then we have 
$$
\frac{x}{g}=\left(\frac{c}{g}\right)u_2+\left(\frac{d}{g}\right)v_2 = \frac{p}{g}\left[ \left(\frac{a}{p}\right)u_1+\left(\frac{b}{p}\right)v_2\right] \in \ell_2\cap pL.
$$
Hence  $\frac xg$ is a  primitive vector in $\ell_2$, whereas it is not a primitive vector in $L$. This is a contradiction to the assumption that $\ell_2$ is primitive. 
 Therefore, we have $(a,b)=(c,d)=1$, and hence $x$ is a primitive vector in both $\ell_1$ and  $\ell_2$. Moreover, $x$ is also a primitive vector in $L$. Let
$$
\ell_1=\z x + \z x_1 \cong \setlength\arraycolsep{2pt} \begin{pmatrix} a&b_1\\ b_1&c_1	\end{pmatrix} \quad \text{and} \quad
\ell_2=\z x + \z x_2 \cong \setlength\arraycolsep{2pt} \begin{pmatrix} a&b_2\\ b_2&c_2	\end{pmatrix}.
$$
If we consider a ternary $\z$-lattice $L':=\z x + \z x_1 + \z x_2 \subseteq L$, then one may easily verify that $B(x_1,x_2)$ belongs to the open interval $I_{\ell_1,\ell_2,a}$ from the fact that $dL'$ is positive.  

To prove that \ref{32cond:2} implies  \ref{32cond:1}, let $t$ be an integer in the open interval $I_{\ell_1,\ell_2,a}$. Consider a ternary $\z$-lattice
$$
L(t)=\begin{pmatrix} a&b_1&b_2\\ b_1&c_1&t \\ b_2&t&c_2 \end{pmatrix}.
$$
Clearly, $L(t)$ represents both $\ell_1$ and $\ell_2$, and the condition $t\in I_{\ell_1,\ell_2,a}$ implies $dL(t)>0$, that is, $L(t)$ is positive definite. 
\end{proof}

\begin{cor}\label{corsuf32}
Let $\ell_1$ and $\ell_2$ be binary $\z$-lattices. If there is a positive integer $a$ with $a^2\le 4d\ell_1d\ell_2$ that is primitively represented by both $\ell_1$ and $\ell_2$, then the pair $(\ell_1,\ell_2)$ is buried in rank $3$.	
\end{cor}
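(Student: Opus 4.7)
The plan is to carry out the construction of the candidate ternary lattice $L(t)$ from the direction \ref{32cond:2}$\Rightarrow$\ref{32cond:1} of Theorem \ref{32necsuf} without relying on the non-burial assumption there, and to show that the interval-length bound $2\sqrt{d\ell_1 d\ell_2}/a\ge 1$ always produces either an integer interior point (giving rank $3$ burial directly) or a degenerate boundary integer that yields rank $2$ burial.

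The first step is to use the primitive representation of $a$ in each $\ell_i$: extend a primitive vector $u_i\in\ell_i$ with $Q(u_i)=a$ to a basis $\{u_i,v_i\}$ of $\ell_i$, obtaining the Gram matrix $\begin{pmatrix} a & b_i \\ b_i & c_i \end{pmatrix}$ with $b_i=B(u_i,v_i)$ and $c_i=Q(v_i)$. For $t\in\z$, form the candidate
$$L(t)=\begin{pmatrix} a & b_1 & b_2 \\ b_1 & c_1 & t \\ b_2 & t & c_2 \end{pmatrix},$$
and complete the square in $t$ to get
$$dL(t)=-a\left(t-\frac{b_1 b_2}{a}\right)^2+\frac{d\ell_1 d\ell_2}{a}.$$
Thus $dL(t)>0$ if and only if $t\in I_{\ell_1,\ell_2,a}$, and the hypothesis $a^2\le 4d\ell_1 d\ell_2$ translates to the open interval $I_{\ell_1,\ell_2,a}$ having length at least $1$.

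If this length is strictly greater than $1$, the open interval contains some integer $t_0$, and then $L(t_0)$ (with basis $e_1,e_2,e_3$) is an integral positive definite ternary $\z$-lattice; the assignments $u_1\mapsto e_1,\, v_1\mapsto e_2$ and $u_2\mapsto e_1,\, v_2\mapsto e_3$ are isometric embeddings of $\ell_1$ and $\ell_2$ into $L(t_0)$, so the pair is buried in rank $3$. If the length equals $1$ but the center $b_1 b_2/a$ is not a half-integer of the form $n+\tfrac12$, the open interval still contains the nearest integer and the same argument concludes.

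The hard case is when the length equals exactly $1$ and both endpoints of $I_{\ell_1,\ell_2,a}$ are integers, so that the open interval misses all integers. At such an integer endpoint $t_0$ one has $dL(t_0)=0$, making $L(t_0)$ an integral positive semi-definite symmetric $3\times 3$ matrix of rank $2$. The crux of the argument is the observation that any such matrix is realized by the Gram matrix of three vectors $w_1,w_2,w_3\in\mathbb{R}^2$, whose $\z$-span is an integral binary $\z$-lattice containing both $\ell_1=\z w_1+\z w_2$ and $\ell_2=\z w_1+\z w_3$. Hence $(\ell_1,\ell_2)$ is buried in rank $2$, and therefore in rank $3$ by the monotonicity remark recorded just before Proposition \ref{buried}. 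Everything outside of this degenerate boundary case is a routine consequence of the determinant computation.
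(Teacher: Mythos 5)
Your proof is correct and takes essentially the same route as the paper: form the candidate $L(t)$, complete the square to get $dL(t)=-a\left(t-b_1b_2/a\right)^2+d\ell_1 d\ell_2/a$, and choose the integer nearest to $b_1b_2/a$ so that $dL(t)\ge 0$. The only difference is that you make explicit the degenerate case $dL(t_0)=0$ (where the three vectors span a rank-$2$ integral lattice, giving burial in rank $2$ and hence in rank $3$), a detail the paper compresses into ``the corollary follows directly from this.''
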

\begin{proof}
From the hypothesis, we may assume that 
$$
\ell_1=  \setlength\arraycolsep{2pt} \begin{pmatrix} a&b_1\\ b_1&c_1	\end{pmatrix} \quad \text{ and } \quad \ell_2=\setlength\arraycolsep{2pt}\begin{pmatrix} a&b_2\\ b_2&c_2 \end{pmatrix},
$$ 
for some integers $b_1,b_2,c_1,$ and $c_2$.
Let $L(t)$ be the $\z$-lattice defined in the proof of Theorem \ref{32necsuf}. Then $L(t)$ represents both $\ell_1$ and $\ell_2$ for any integer $t$, though it is not necessarily positive definite.
If we choose an integer $t$ such that $|t-b_1b_2/a|\le 1/2$, then
$$
dL(t)=-a(t-b_1b_2)^2+d\ell_1d\ell_2/a \ge -a/4+d\ell_1d\ell_2/a \ge 0.
$$
Therefore, $L(t)$ is a positive semi-definite $\z$-lattice of rank $3$. The corollary follows directly from this. 
\end{proof}

\begin{exam} In general, the converse of Corollary \ref{corsuf32} does not hold. Let us consider the following two binary $\z$-lattices:
$$
\ell_1=\begin{pmatrix} 21&5\\5&64 \end{pmatrix} \quad \text{and} \quad \ell_2=\begin{pmatrix} 24&1\\1&55 \end{pmatrix}.
$$ 
They are in the same genus and $d\ell_1=d\ell_2=1319$. The smallest positive integer $a$ that is primitively represented by both $\ell_1$ and $\ell_2$ is $3080$. 
Note that $a^2>4d\ell_1d\ell_2$ and
$$
\ell_1 \cong \begin{pmatrix} 3080&1321\\1321&567 \end{pmatrix} 
\quad \text{and}\quad 
\ell_2 \cong \begin{pmatrix} 3080&1409\\1409&645 \end{pmatrix}.
$$
Although $\ell_1$ and $\ell_2$ do not satisfy the condition of Corollary \ref{corsuf32}, the ternary $\z$-lattice $L(604)$ defined in the proof of Theorem \ref{32necsuf}, that is,
$$
L(604)=\begin{pmatrix}
3080&1321&1409\\
1321&567&604\\
1409&604&645
\end{pmatrix}
$$
represents both $\ell_1$ and $\ell_2$.
\end{exam}

\begin{conj} Let $\ell_1,\ell_2$ be any binary $\z$-lattices with $d\ell_1=d\ell_2$.  
Moreover, assume that the pair $(\ell_1,\ell_2)$ is buried in a genus of rank $3$.
Then we conjecture that the pair $(\ell_1,\ell_2)$ is buried in rank $3$. We checked this conjecture is true when $d\ell_1=d\ell_2 \le 3000$. 
\end{conj}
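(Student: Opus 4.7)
The plan is to apply Theorem \ref{32necsuf}, which reduces the claim that $(\ell_1,\ell_2)$ is buried in rank $3$ to producing a positive integer $a$ that is primitively represented by both $\ell_1$ and $\ell_2$ and for which the open interval $I_{\ell_1,\ell_2,a}$ contains an integer. Corollary \ref{corsuf32} tells us that any common primitive representation with $a^{2}\le 4 d\ell_1 d\ell_2$ automatically forces the interval to contain an integer, so the crux is to produce a sufficiently small common primitive representation. We may first reduce to the case where both $\ell_1$ and $\ell_2$ are primitive, since any ternary overlattice of the primitive hulls scales back to one of the original pair.

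From the burial hypothesis, fix a ternary $\z$-lattice $L$ with a saturated embedding $\ell_1\hookrightarrow L$ and some $L'\in\gen(L)$ with a saturated embedding $\ell_2\hookrightarrow L'$. Taking orthogonal complements yields positive integers $a_1,a_2$, namely the norms of generators of $\ell_1^{\perp}\subseteq L$ and $\ell_2^{\perp}\subseteq L'$. Since $d\ell_1=d\ell_2$ and $dL=dL'$, the ratios $a_i$ modulo the squares of the gluing indices $[L:\ell_1\perp\ell_1^{\perp}]$ and $[L':\ell_2\perp\ell_2^{\perp}]$ coincide, and the local isometries $L_p\cong L'_p$ force $a_1$ and $a_2$ to agree up to local squares at every prime. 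This global/local data is then the raw material from which one must extract a genuine common primitive representation of modest size, ideally by showing that the ternary lattices $\ell_1\perp\langle a_1\rangle$ and $\ell_2\perp\langle a_2\rangle$ sit, up to scaling, in the same ternary genus and then mining the primitive spectrum.

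The main obstacle, and presumably the reason the statement is left as a conjecture, has two faces. First, the burial hypothesis only supplies representation by the \emph{genera} of $\ell_1$ and $\ell_2$, whereas the criterion of Theorem \ref{32necsuf} requires representations by $\ell_1$ and $\ell_2$ themselves; bridging this gap calls for spinor-genus or class-level information that is not a direct consequence of the hypothesis. Second, even with such common representations in hand, it is quantitatively delicate to exhibit one with $a\le 2\sqrt{d\ell_1 d\ell_2}$. A plausible route would combine an effective equidistribution result for primitive representations by binary forms (in the spirit of Duke's theorem for Heegner points) with the local data above to produce small common representations once $d\ell_1=d\ell_2$ is large enough, leaving the remaining small discriminants to a computer check matching the authors' verification up to $3000$. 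An alternative is a Kneser-style neighborhood argument within $\gen(L)$: try to build a sequence of $p$-neighbors from $L'$ to $L$ that preserves the embedding of $\ell_2$, the difficulty being that neighbor moves which preserve the ambient genus need not preserve a prescribed sublattice.
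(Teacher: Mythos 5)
The statement you are trying to prove is stated in the paper as a \emph{conjecture}: the authors give no proof and only report a computational verification for $d\ell_1=d\ell_2\le 3000$. So there is no argument in the paper to compare yours against, and your text, by its own admission, is a strategy outline rather than a proof --- it identifies the two obstacles (passing from genus-level to class-level representations, and producing a quantitatively small common primitive value) without overcoming either. As it stands, the conjecture remains unproved by your proposal.

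Two of the gaps deserve to be named precisely. First, your ``crux'' of finding a common primitive value $a$ with $a^2\le 4\,d\ell_1 d\ell_2$ is provably not always available: the paper's own example with $\ell_1=\left(\begin{smallmatrix}21&5\\5&64\end{smallmatrix}\right)$, $\ell_2=\left(\begin{smallmatrix}24&1\\1&55\end{smallmatrix}\right)$ has $d\ell_1=d\ell_2=1319$ and smallest common primitive value $a=3080>2\sqrt{d\ell_1 d\ell_2}$, yet the pair is buried in rank $3$. Hence Corollary \ref{corsuf32} cannot serve as the engine of a general proof; one is forced back to the interval condition of Theorem \ref{32necsuf} for some possibly large $a$, and no mechanism you describe controls when $I_{\ell_1,\ell_2,a}$ contains an integer. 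Second, the appeal to Duke-type equidistribution would at best be effective only for discriminants beyond an enormous (or ineffective) threshold, so the ``remaining small cases'' could not be closed by the authors' check up to $3000$; and the Kneser neighborhood idea founders exactly where you say it does, since $p$-neighbor steps within $\gen(L)$ need not preserve the embedded copy of $\ell_2$. Your preliminary reduction to primitive $\ell_1,\ell_2$ is also not automatic: replacing a lattice by its primitive hull can change the discriminant and destroy the hypothesis $d\ell_1=d\ell_2$. In short, the proposal correctly diagnoses why this is hard, but does not prove the statement --- which is consistent with its status in the paper as an open conjecture.
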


\section{Lower rank cases}
In this section, we compute the value $\kappa(m,n)$ for any integer $m$ with $3\le m \le8$.   

\begin{thm}
We have the followings:
\begin{newenum}
\item $\uu=1$ for any $(m,n)$ with $m\le n < \uz(m)-1$ and $3\le m \le 8$;
\item $\uu$ is given as follows when $n=\uz(m)-1$.
\vskip -10pt
\begin{table} [ht]
	\begin{center}
		\begin{tabular}{|c|c|c|c|c|c|c|c|c|}
			\hline
			$(m,n)$ & $(3,5)$ & $(4,6)$ & $(5,7)$ & $(6,12)$ & $(7,14)$ & $(8,15)$ \\
			\hline
			$\uu$ & $1$ {\rm or} $2$ & $1$ {\rm or} $2$ & $1$ & $2$ & $2$ & $1$ \\
			\hline
		\end{tabular}
	\end{center}
\end{table}
\end{newenum}
\end{thm}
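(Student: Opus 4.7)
The plan is to reduce both parts to verifying a finite list of boundary values of $\kappa(m,n)$ and then to construct explicit witness lattices. By the monotonicity $\kappa(m,n) \le \kappa(m,n+1)$ of Theorem \ref{thmgeneral}(ii), part (i) reduces to proving $\kappa(m, \uz(m) - 2) = 1$ for each $m \in \{3, \ldots, 8\}$. Since the entries in (ii) already give $\kappa(5,7) = \kappa(8,15) = 1$ — which, by the same monotonicity, imply $\kappa(5,6) = \kappa(8,14) = 1$ — the remaining single-case bounds needed for (i) are
\[
\kappa(3,4) = \kappa(4,5) = \kappa(6,11) = \kappa(7,13) = 1,
\]
together with $\kappa(6,12) = \kappa(7,14) = 2$ and the weaker bounds $1 \le \kappa(3,5), \kappa(4,6) \le 2$ from (ii).

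Each upper bound $\kappa(m,n) \le 1$ will be established by exhibiting two rank-$m$ lattices $\ell_1, \ell_2$ for which no rank-$n$ lattice represents both. The template is to take $\ell_1$ to be a rigid class-number-one lattice (e.g.\ $I_m$, or the root lattice $E_8$ when $m = 8$), so that any rank-$n$ representative $L$ must split orthogonally as $L \cong \ell_1 \perp L'$ with $\rank(L') = n - m$; then $\ell_2$ is chosen with a local invariant at a small prime (typically $p = 2$ or $3$) that is incompatible with $\ell_1 \perp L'$ for every positive-definite $L'$ of the required rank. For the small pairs $(3,4)$ and $(4,5)$ the candidate $L'$ has rank $\le 1$ and the verification reduces to a direct Hasse-symbol check. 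For the larger pairs $(6,11), (7,13), (5,7)$ and $(8,15)$, the explicit classifications of minimal-rank $m$-universal lattices in \cite{B,C,KKO,O,R} tightly constrain the shape of $L$, reducing the problem to a finite local computation at the bad primes.

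For the lower bounds in part (ii), the inequalities $\kappa(6,12) \ge 2$ and $\kappa(7,14) \ge 2$ are immediate: for any rank-$m$ lattices $\ell_1, \ell_2$, the orthogonal sum $\ell_1 \perp \ell_2$ has rank $2m = n$ and represents both. The matching upper bounds $\kappa(6,12), \kappa(7,14) \le 2$, as well as $\kappa(3,5), \kappa(4,6) \le 2$, require producing a triple $\{\ell_1, \ell_2, \ell_3\}$ of rank-$m$ lattices with no common rank-$n$ representative. A natural approach is to select three lattices whose combined Hasse-Minkowski invariants at a small set of primes cannot be simultaneously accommodated by any rank-$n$ overlattice, modeled on the Hasse-symbol constructions used in Lemma \ref{f-qua}.

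The principal obstacle is the non-representation verification in the upper-bound arguments, especially for $m = 6, 7, 8$: although the $m$-universal lattice lists bound the possibilities, the Jordan decompositions of $L$ over $\z_2$ and over the odd bad primes generate many sub-cases, each requiring a separate local calculation. The cases $(3,5)$ and $(4,6)$ remain undetermined precisely because $n = \uz(m) - 1$ leaves the rank-$n$ candidate $L$ enough flexibility that no generic local obstruction suffices to exclude a common representative for just two rank-$m$ lattices, yet not enough room to guarantee one for every pair; resolving the dichotomy $\{1,2\}$ here appears to require new techniques beyond the direct local-to-global reductions used in the rest of the proof.
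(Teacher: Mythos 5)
Your reductions are sound as far as they go: monotonicity in $n$ reduces part (i) to the top cases, $\kappa(3,4)=\kappa(4,5)=1$ is already Theorem \ref{thmgeneral}\ref{thmgen:3}, the lower bounds $\kappa(6,12),\kappa(7,14)\ge 2$ via $\ell_1\perp\ell_2$ are exactly right, and $\kappa(4,6)\le\kappa(3,5)\le\kappa(2,4)=2$ falls out of Theorem \ref{thmgeneral}(ii) together with Theorem \ref{thm3242} --- no new triple is needed for those two entries. The genuine gaps are in the two engines you propose for the remaining upper bounds. First, the classification of $m$-universal lattices of minimal rank $\uz(m)$ constrains lattices that represent \emph{every} rank-$m$ lattice; it gives no information about a lattice of rank $n<\uz(m)$ that merely represents two prescribed rank-$m$ lattices, so it cannot ``tightly constrain the shape of $L$'' in the cases $(5,7)$, $(6,11)$, $(7,13)$, $(8,15)$. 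Second, a Hasse-symbol obstruction for a triple in rank $n=2m$ cannot exist: for $n\ge m+2$ (a fortiori $n=2m\ge m+3$) any finite collection of rank-$m$ lattices is simultaneously represented over every $\z_p$ by a suitable maximal lattice of rank $n$ (compare Propositions \ref{buried} and \ref{local-g}), so the obstruction for $(6,12)$ and $(7,14)$ is necessarily global, not local, and your modelling on Lemma \ref{f-qua} breaks down.

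The missing idea is the uniqueness of the orthogonal decomposition of a positive definite lattice into indecomposable components. Writing a common representative of $I_m$ and $E_m$ as $L=I_k\perp M$ with $m_1(M)\ge 2$, the norm-$2$ vectors of $L$ all lie in $I_k$ or in $M$, and connectedness of the $E_m$ root system forces $E_m\ra M$; hence $L$ represents $I_m\perp E_m$, of rank $2m$. This single pair gives $\uu=1$ for all $m\le n\le 2m-1$, disposing of $(6,11)$, $(7,13)$ and $(8,15)$ at once. For $n=2m$ a discriminant count shows the only candidate is $L\cong I_m\perp E_m$ itself, and one then checks $A_6\nra I_6\perp E_6$ and $A_677\left[2\frac17\right]\nra I_7\perp E_7$, giving $\kappa(6,12)=\kappa(7,14)=2$. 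The case $(5,7)$ needs a bespoke global argument in the same spirit: with $\ell_1=A_5$ and $\ell_2=I_2\perp\begin{pmatrix}4&1\\1&4\end{pmatrix}\perp\df{105}$, the decomposition theorem forces $L\cong I_2\perp A_5$ or $I_6\perp\df{t}$, and an orthogonal-complement computation eliminates both. Note also that $A_5$ is not unimodular, so your template ``$\ell_1$ splits off'' does not apply to it; the splitting comes from the $I_2$ inside $\ell_2$.
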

\begin{proof}
Note that if $\uu[5,7]=1$, then the results for $3\le m\le 4$ follows immediately by Theorems \ref{thmgeneral} and \ref{thm3242}.
Therefore, we first show that $\uu[5,7]=1$.  In fact, we will show that there is no $\z$-lattice of rank $7$ that represents both
$$
\ell_1:=A_5 \quad \text{and}\quad \ell_2:=I_2\perp K \perp \df{105}, \text{ where } K=\begin{pmatrix} 4&1\\1&4 \end{pmatrix}.
$$
Assume to the contrary that a $\z$-lattice $L$ of rank $7$ represents  the above two quinary $\z$-lattices. Let $L=I_k \perp M$, where $M$ is a $\z$-lattice of rank $7-k$ such that $m_1(M)\ge2$. Since $L$ represents $\ell_2$, we have $k\ge 2$. 
Furthermore, since $L$ represents the indecomposable lattice $\ell_1=A_5$, it should be isometric to either $I_2\perp A_5$ or $I_6\perp\df{t}$ for some positive integer $t$. However, one may easily check that $\ell_2 \nra I_2 \perp A_5$, which implies that $L=I_6\perp\df{t}$ for some positive integer $t$. Now, since $\ell_2$ is represented by $L$, the binary $\z$-lattice $K$ is represented by $I_4\perp\df{t}$.  Since $K$ is not represented by $I_4$,  we have $1\le t \le 3$.  
Under the assumption that $K$ is a sublattice of $I_4\perp\df{t}$,  the orthogonal complement $K^\perp$ of $K$ in $I_4\perp\df{t}$ is isometric to
$$
\begin{pmatrix} 2&1\\1&3 \end{pmatrix} \perp \df{3},\ \df{1,3,10}, \text{ or }
\begin{pmatrix} 2&1\\1&2 \end{pmatrix} \perp \df{15},
$$
according as $t=1,2$, or $3$, respectively. However, $\df{105}$ is not represented by $K^\perp$ in any cases, which contradicts to the assumption that $\ell_2 \ra L$.

Now, assume that $m=6,7$, or $8$. Note that if a $\z$-lattice $L$ represents both $I_m$ and the root lattice $E_m$ of rank $m$, then $L$ should represent $I_m\perp E_m$. This implies that $\uu=1$ for any integer $n$ with $m\le n \le 2m-1$. On the other hand, since 
$$
A_6 \nra I_6 \perp E_6 \qquad \text{and} \qquad A_677\left[2\frac17\right] \nra I_7 \perp E_7,
$$
we have $\uu[6,12]=\uu[7,14]=2$. For the definition of the above $\z$-lattice, see \cite{cs}.  This completes the proof. 
\end{proof}

\end{document}